\tikzset{
    %Define standard arrow tip
    >=stealth',
    %Define style for different line styles
    help lines/.style={thick},
    axis/.style={<-, thick},
    important line/.style={thick},
    connection/.style={thick, dotted},
    shadow lines/.style={dashed, thick},
    extended line/.style={->},
    extended line/.default=1cm]
}
\newtheorem{theorem}{Theorem}
\newtheorem{lemma}[subsection]{{\bf Lemma}}
\newtheorem{definition}[subsection]{Definition}
\begin{document}
    \title{Linear Congruences in several variables with congruence restrictions} %\\ \today}
\author[Babu]{C. G. Karthick Babu}
\email{cgkarthick24@gmail.com}
\author[Bera]{Ranjan Bera}
\email{ranjan.math.rb@gmail.com}
\author[Sury]{B. Sury}
\email{surybang@gmail.com}
\address{Statistics and Mathematics Unit, Indian Statistical Institute, R.V. College Post, Bangalore-560059, India.}

\thanks{2010 Mathematics Subject Classification: Primary 11D79, 11P83, 11A25, 11T55, 11T24.\\
Keywords: System of congruence, Finite fields, Restricted linear congruences, Ramanujan sum, Discrete Fourier transform.}

\begin{abstract}
In this article, we consider systems of linear congruences in
several variables and obtain necessary and sufficient conditions as
well as explicit expressions for the number of solutions subject to
certain restriction conditions. These results are in terms of
Ramanujan sums and generalize the results of Lehmer \cite{DNL13} and
Bibak et al. \cite{BBVRL17}. These results have analogues over
$\mathbb{F}_q[t]$ where the proofs are similar, once notions such as
Ramanujan sums are defined in this set-up. We use the recent
description of Ramanujan sums over function fields as developed by
Zhiyong Zheng \cite{ZZ18}. This is discussed in the last section. We illustrate
the formulae obtained for the number of solutions through some
examples. Over the integers, such problems have a rich history, some
of which seem to have been forgotten - a number of papers written
on the topic re-prove known results. The present authors also became
aware of some of these old articles only while writing the present
article and hence, we recall very briefly some of the old work by H.
J. S. Smith, Rademacher, Brauer, Butson and Stewart, Ramanathan, McCarthy, and Spilker \cite{AB26, BS55, PJM76, HR25, KGR44, HJSS61, JS96}.

\end{abstract}

\maketitle
\pagenumbering{arabic}
\pagestyle{myheadings}
\markright{Multi variable CRT}
%\markleft{Bera and Babu}
\section{{\bf Introduction}}

The Chinese remainder theorem (CRT) originated in the work of
Sun-Tsu in the 3rd century AD, addresses a system of congruences of
the form
\begin{align}\label{1v congs eq}
    x &\equiv b_{i} \pmod{m_{i}}, (1 \leq i \leq k).
\end{align}
If $m_{1}, \dots, m_{k}$ are pairwise coprime positive integers, and
$b_{1}, \dots, b_{k}$ are arbitrary integers, then it asserts that
the system \eqref{1v congs eq}  has a unique solution modulo $m =
m_{1} \cdots m_{k}$. Throughout the paper, we shall use the
notations $(u_{1}, \dots, u_{k})$ and $[u_{1}, \dots, u_{k}]$  to
denote the gcd and lcm of integers $u_{1}, \dots, u_{k}$
respectively.
 In 1952, Oystein Ore \cite{OO52} proved a
version for non-coprime moduli in the American Mathematical Monthly.
He showed that if the system of congruences \eqref{1v congs eq} has
solution if and only if for all $1 \leq i \neq j \leq k$,
\begin{equation}\label{gcd cond}
b_{i} \equiv b_{j} \pmod{d_{ij}}, \ \text{where} \ d_{ij}=
(m_{i},m_{j}).
\end{equation}
When the conditions \eqref{gcd cond} are satisfied, the solution of
\eqref{1v congs eq} is uniquely determined modulo the least common
multiple $m=[m_{1}, \dots, m_{k}]$.

\vspace{2mm}
\noindent
Let $a_{1}, \dots, a_{n}, b, m \in \mathbb{Z}$, $m \geq 1$.
A linear congruence in $n$ unknowns $x_{1}, \dots, x_{n}$ is of the form
\begin{equation}\label{linear cong}
a_{1}x_{1}+\dots+a_{n}x_{n} \equiv b \pmod{m}.
\end{equation}
D. N. Lehmer \cite{DNL13} showed that a linear congruence
represented by \eqref{linear cong} has a solution $(x_{1},
\dots, x_{n}) \in \mathbb{Z}_{m}^{n}$ if and only if $\ell \mid b$,
where $\ell=(a_{1}, \dots, a_{n}, m)$ and if this condition is
satisfied, then there are $\ell m^{n-1}$ many solutions. Instead of
a single linear congruence, one could consider a system of linear
congruences and explore its solutions, which can be viewed as a
multi-variable extension of the Chinese remainder theorem. This is
relatively straightforward, and was done in \cite{BS15} - but the author was unaware of Lehmer's result. More precisely, the following result was established.\\
Suppose $a_{ij}$ are integers (for $1 \leq i \leq k$, $1 \leq j \leq
n$). For positive integers $m_{1}, \dots, m_{k}$ and arbitrary
integers $b_{1}, \dots, b_{k}$, consider the system of $k$
congruences in $n$ unknowns $x_{1}, \dots, x_{n}$:
\begin{align}\label{congs eq}
    a_{11}x_{1}+a_{12}x_{2}+\dots+a_{1n}x_{n} &\equiv b_{1} \pmod{m_{1}},\nonumber\\
    a_{21}x_{1}+a_{22}x_{2}+\dots+a_{2n}x_{n} &\equiv b_{2} \pmod{m_{2}},\nonumber\\
    \cdots \cdots \cdots \cdots & \cdots \cdots\nonumber\\
    a_{k1}x_{1}+a_{k2}x_{2}+\dots+a_{kn}x_{n} &\equiv b_{k} \pmod{m_{k}}.
\end{align}

\begin{theorem}
Let $m_{1}, \dots, m_{k}$ be pairwise coprime integers. The system
of congruences above has a solution $x_{1}, \dots, x_{n}$ in
integers if and only if,  $(m_i, a_{i1}, a_{i2}, \dots,
a_{in})|b_{i}$ for each $i \leq k$.
\end{theorem}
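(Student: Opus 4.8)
The plan is to reduce the statement to D. N. Lehmer's single-congruence criterion \cite{DNL13} together with the classical CRT, exploiting that the moduli are pairwise coprime. The two directions separate cleanly: necessity follows from applying Lehmer's result to each congruence in isolation, while sufficiency requires splicing the separate solutions together, and it is precisely here that pairwise coprimality enters.

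For the necessity direction, I would observe that if the full system \eqref{congs eq} has an integer solution $(x_1, \dots, x_n)$, then in particular the $i$-th congruence $a_{i1}x_1 + \dots + a_{in}x_n \equiv b_i \pmod{m_i}$ is satisfied, hence solvable. By Lehmer's criterion applied to this single linear congruence in $n$ unknowns, solvability forces $(m_i, a_{i1}, \dots, a_{in}) \mid b_i$. Since $i$ is arbitrary, all $k$ divisibility conditions must hold. Note this direction uses nothing about pairwise coprimality.

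For sufficiency, suppose each condition $(m_i, a_{i1}, \dots, a_{in}) \mid b_i$ holds. Then by Lehmer's result the $i$-th congruence, considered on its own, admits a solution, i.e.\ there is a tuple $(x_1^{(i)}, \dots, x_n^{(i)}) \in \mathbb{Z}^n$ with $\sum_{j} a_{ij} x_j^{(i)} \equiv b_i \pmod{m_i}$. The task is to glue these $k$ separate tuples into a single tuple valid modulo every $m_i$ at once. I would do this \emph{componentwise}: for each fixed $j \in \{1, \dots, n\}$, the system $x_j \equiv x_j^{(i)} \pmod{m_i}$ $(1 \leq i \leq k)$ is solvable for one integer $x_j$, since $m_1, \dots, m_k$ are pairwise coprime and CRT applies. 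Carrying this out for each of the $n$ variables produces a tuple $(x_1, \dots, x_n)$.

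Finally I would verify this tuple solves the whole system: fixing $i$, since $x_j \equiv x_j^{(i)} \pmod{m_i}$ for every $j$, we obtain $\sum_j a_{ij} x_j \equiv \sum_j a_{ij} x_j^{(i)} \equiv b_i \pmod{m_i}$, and as this holds for each $i$ the system \eqref{congs eq} is satisfied. I do not anticipate a genuine obstacle here; the only point demanding care is the recognition that CRT must be invoked separately for each coordinate $x_j$ rather than on the congruences as a block, and that the pairwise coprimality of the $m_i$ is exactly the hypothesis that legitimizes this coordinatewise gluing.
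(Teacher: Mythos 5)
Your proof is correct, and it is a genuinely different and more elementary route than what the paper does. The paper never proves this statement directly (it cites \cite{BS15} for it); instead it establishes the stronger Theorem \ref{soln of congs thm}, which computes the exact number of solutions as $m^{n-1}\prod_{i=1}^{k}\ell_i$ via a substitution $a_{ij}x_j \equiv y_j \pmod{m_i}$, Ramanujan sums and the DFT, M\"obius inversion (Lemma \ref{MI of E}), and the gcd/lcm identity of Lemma \ref{lcmgcd lem} --- the existence criterion then falls out as the condition under which that count is nonzero. Your argument instead treats Lehmer's criterion as a black box for each row and glues the row-wise solutions coordinatewise by the classical CRT, which is exactly the right way to use pairwise coprimality and is logically complete: necessity needs no coprimality at all, and sufficiency only needs CRT applied to each variable $x_j$ separately, as you correctly emphasize. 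What you give up relative to the paper's machinery is the solution count (your gluing shows at least one solution exists but does not enumerate them), and the paper's analytic setup is also what powers the later restricted-congruence Theorem \ref{Theorem 3}; what you gain is a short, self-contained proof of the existence statement that does not require any of the Ramanujan-sum apparatus.
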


\vspace{2mm}

\noindent In 1861, H. J. S. Smith \cite{HJSS61} first studied and
provided a necessary and sufficient condition for the existence of a solution to the system of congruences represented by
\eqref{congs eq}. If this condition is satisfied, then he also gave
the exact number of solutions of \eqref{congs eq} that are there in
$\mathbb{Z}_{m}^{n}$, where $m=[m_{1}, \dots, m_{k}]$. However,
despite the elegant matrix theoretic methods of Smith, his paper
seems to have gone unnoticed by later authors working on these
problems.

\vspace{2mm} \noindent In 1954, A. T. Butson and B. M. Stewart
\cite{BS55} presented a concise proof of Smith's result using the
concepts of invariant factors and the Smith normal form of a matrix
with elements in a principal ideal ring. Furthermore, Butson and
Stewart extended the study of systems of linear congruences modulo
an ideal and systems over a set of integral elements within an
associative algebra.

\vspace{2mm} \noindent It is worth noting that both the necessary
and sufficient condition for the existence of a solution and the
calculation of the number of solutions provided by Butson and
Stewart rely on the use of invariant factors of the Smith normal
form (see Section 3 and Section 5 of \cite{BS55}). However, calculating these factors can be challenging, as it
involves determining the greatest common divisor of determinants of
minors derived from the given matrix.

\vspace{2mm} \noindent In the first part of this paper, we revisit
and reprove the result of Lehmer \cite{DNL13} and its
straightforward generalization in \cite{BS15} using Ramanujan sums.
Additionally, we obtain expressions for the number of solutions to
the systems of linear congruences represented by
\eqref{congs eq} in terms of the greatest common divisor of the
coefficients, rather than relying on the invariant factors of the
Smith normal form. More precisely, we prove the following theorem.

\begin{theorem}\label{soln of congs thm}
Let $m_{1}, \dots, m_{k}$ be pairwise co-prime integers and
$m=m_{1}\cdots m_{k}$. The system of congruences represented by \eqref{congs eq} has a solution $\langle x_{1}, \dots,
x_{n} \rangle \in \mathbb{Z}_{m}^{n}$ if and only if for each $i
\leq k$, $\ell_{i} \mid b_{i}$, where $\ell_{i}= (a_{i1}, \dots,
a_{in}, m_{i})$, and if this condition is satisfied, then there are
$m^{n-1} \prod_{i=1}^{k} \ell_{i}$ solutions.

We note that when $k=1$, this result is the same as Lehmer's theorem
\cite{DNL13}.
\end{theorem}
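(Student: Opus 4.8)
The plan is to exploit the pairwise coprimality of the moduli to decouple the system completely, reducing the count to a product of single-congruence counts, each of which is governed by Lehmer's theorem (the $k=1$ case). First I would invoke the Chinese remainder theorem in the form of the ring isomorphism $\mathbb{Z}_m \cong \mathbb{Z}_{m_1} \times \cdots \times \mathbb{Z}_{m_k}$, which induces a bijection $\mathbb{Z}_m^n \cong \prod_{i=1}^k \mathbb{Z}_{m_i}^n$ sending $\langle x_1, \dots, x_n\rangle$ to the tuple of its coordinatewise reductions. The crucial observation is that the $i$-th congruence of \eqref{congs eq}, being taken modulo $m_i$, depends only on the residues $x_j \bmod m_i$, that is, only on the $i$-th coordinate block of the image. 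Consequently, under this bijection the system \eqref{congs eq} is equivalent to the conjunction of $k$ mutually independent single congruences
\[
 a_{i1}y_1 + \cdots + a_{in}y_n \equiv b_i \pmod{m_i}, \qquad \langle y_1,\dots,y_n\rangle \in \mathbb{Z}_{m_i}^n,
\]
one for each $i$. Hence the system is solvable if and only if every one of these single congruences is solvable, and when solvable the total number of solutions in $\mathbb{Z}_m^n$ is the product of the numbers of solutions of the individual congruences in $\mathbb{Z}_{m_i}^n$.

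The single-congruence count is then obtained by the Ramanujan-sum (discrete Fourier) method that underlies Lehmer's theorem. For a fixed $i$, writing $e_{m_i}(z)=e^{2\pi i z/m_i}$ and using the orthogonality relation $\frac{1}{m_i}\sum_{t=0}^{m_i-1}e_{m_i}(tc)=1$ or $0$ according as $m_i\mid c$ or not, the number $N_i$ of solutions in $\mathbb{Z}_{m_i}^n$ becomes
\[
 N_i=\frac{1}{m_i}\sum_{t=0}^{m_i-1} e_{m_i}(-t b_i)\prod_{j=1}^n\Big(\sum_{y\in\mathbb{Z}_{m_i}} e_{m_i}(t a_{ij} y)\Big).
\]
Evaluating the inner geometric sums (each equals $m_i$ when $m_i\mid t a_{ij}$ and $0$ otherwise) collapses the outer sum to those $t$ for which $m_i\mid t a_{ij}$ for all $j$, and the resulting expression is a Ramanujan-type sum whose evaluation yields $N_i=\ell_i m_i^{n-1}$ precisely when $\ell_i=(a_{i1},\dots,a_{in},m_i)\mid b_i$, and $N_i=0$ otherwise. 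This is exactly the content of Lehmer's theorem, which may instead be quoted directly once the decoupling is in hand.

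Finally I would assemble the pieces. Solvability of the full system is equivalent to $\ell_i\mid b_i$ for every $i\le k$, and the total count is
\[
 \prod_{i=1}^k N_i=\prod_{i=1}^k \ell_i m_i^{n-1}=\Big(\prod_{i=1}^k m_i\Big)^{n-1}\prod_{i=1}^k \ell_i=m^{n-1}\prod_{i=1}^k \ell_i,
\]
using $m=m_1\cdots m_k$. The main obstacle is not the decoupling, which is a formal consequence of coprimality, but the honest evaluation of the single-modulus character sum: one must correctly identify the surviving terms with a Ramanujan sum and carry out its evaluation so as to extract both the divisibility criterion $\ell_i\mid b_i$ and the exact factor $\ell_i m_i^{n-1}$.
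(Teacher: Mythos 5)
Your argument is correct, but it takes a genuinely different route from the paper's. You use the CRT isomorphism $\mathbb{Z}_{m}^{n} \cong \prod_{i=1}^{k}\mathbb{Z}_{m_{i}}^{n}$ to decouple the system \eqref{congs eq} into $k$ mutually independent single congruences modulo the $m_{i}$, apply Lehmer's count $N_{i}=\ell_{i}m_{i}^{n-1}$ (valid exactly when $\ell_{i}\mid b_{i}$) to each, and multiply. The paper instead substitutes $y_{j}\equiv a_{ij}x_{j}\pmod{m_{i}}$, counts the $d_{j}=\prod_{i}(a_{ij},m_{i})$ preimages of each admissible $y_{j}$, merges the $k$ congruences in the $y_{j}$ into a single congruence $y_{1}+\dots+y_{n}\equiv b\pmod{m}$ by applying CRT to the right-hand sides, and then evaluates the total by summing the restricted-congruence formula of Lemma \ref{without coeff lem} over all gcd patterns, finishing with M\"{o}bius inversion (Lemma \ref{MI of E}) and the lcm identity of Lemma \ref{lcmgcd lem}. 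Your decoupling is shorter and more elementary---it reduces the theorem immediately to its $k=1$ case---whereas the paper's longer path exercises the Ramanujan-sum/DFT machinery that it then reuses for the restricted count in Theorem \ref{Theorem 3}. Two small points about your single-modulus computation: after the inner geometric sums collapse, the surviving $t$ are exactly the multiples of $m_{i}/\ell_{i}$, and identifying them as such uses the identity $\big[\tfrac{m_{i}}{d_{i1}},\dots,\tfrac{m_{i}}{d_{in}}\big]=\tfrac{m_{i}}{\ell_{i}}$ (the paper's Lemma \ref{1var lcmgcd lem}); and the resulting sum $\sum_{s=0}^{\ell_{i}-1}e(-sb_{i}/\ell_{i})$ is a full geometric sum rather than a Ramanujan sum proper, though it evaluates to $\ell_{i}$ or $0$ exactly as you claim.
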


\vspace{2mm} In another direction, the linear congruence represented
by \eqref{linear cong} with some restrictions on solutions
$x_i$ has also been studied in the literature. One such type of
restriction is to demand that the solutions should satisfy $(x_{i},
m)=t_{i}$ for $1 \leq i \leq n$, where $t_{1}, \dots, t_{n}$ are
given positive divisors of $m$. A linear congruence with such
restrictions is called a restricted linear congruence. Assume
$a_{1}, \dots, a_{n}, b$ are fixed and let $N_{m}(b; t_{1}, \dots,
t_{n})$ denote the number of incongruent solutions of
\begin{equation}\label{restr lin cong}
a_{1}x_{1}+\dots+a_{n}x_{n} \equiv b \pmod{m}, \ \ (x_{i}, m)=t_{i} \ \text{for } 1 \leq i \leq n.
\end{equation}
Rademacher \cite{HR25} and Brauer \cite{AB26} independently gave a
formula for the number of solutions $N_{m}(b; t_{1}, \dots, t_{n})$
of the linear congruence \eqref{restr lin cong} with $a_{i}=1$ and
$t_{i}=1$ for $1 \leq i \leq n$. Later Nicol and Vandiver
\cite{NV54} and E. Cohen \cite{CE55} obtained the following
equivalent formula:
 $$N_{m}(b; 1, \dots, 1)=\frac{1}{m}\sum_{d \mid m}C_{d}(b)(C_{m}(m/d))^{n},$$
where $C_{r}(a)$ denotes a Ramanujan sum. Also, see Ramanathan \cite{KGR44} and Spilker \cite{JS96} for further results with these and other restrictions on linear congruences. Moreover, in 1976, restricted solutions of some systems of linear congruences were studied by McCarthy \cite{PJM76} in another context.

\vspace{2mm} \noindent The study of restricted congruences has
garnered significant interest due to its applications in various
fields, such as number theory, cryptography, combinatorics, and
computer science. In \cite{VL10}, Liskovets introduced a
multivariate arithmetic function, and it can be seen that a special
case of the restricted congruence problem with $b=0$ and $a_i=1$ is
closely related to this multivariate function. This function finds
applications in both combinatorics and topology. In fact, for our purpose, we need to study this multivariate function twisted with an additive character (see \eqref{Ebm defn}). Furthermore, in
computer science, the restricted congruence problem plays a role in
the study of universal hashing, as discussed by Bibak et al.
\cite{BKVT18}.

\vspace{2mm} \noindent In 2017, Bibak et al. \cite{BBVRL17},
provided a general formula for the number of solutions $N_{m}(b;
t_{1}, \dots, t_{n})$ of the linear congruence \eqref{restr lin
cong}. They proved this using Ramanujan sums and the discrete
Fourier transform (DFT) of arithmetic functions.

In the second part of our paper, we focus on the multi-variable
analogue of their theorem. In particular, when $(x_j, m_i) =
t_{ij}$, we provide a formula for determining the number of
solutions for the aforementioned system of linear congruences. In our
approach also, we primarily use properties of Ramanujan sums, as
well as the discrete Fourier transform of arithmetic functions. The
number of solutions with $k=1$ and all the coefficients being equal
to $1$ was initially studied by Rademacher in 1925 and Brauer in
1926. Since then, the problem has been widely explored and found
significant applications in diverse areas as mentioned above. \vskip
2mm

More precisely, we establish the following result:
\begin{theorem}\label{Theorem 3}
Let $m_{1}, \dots, m_{k}$ be pairwise coprime integers and
$m=m_{1}\cdots m_{k}$. Let $t_{ij} \mid m_{i}$ (for $1 \leq i \leq
k$, $1 \leq j \leq n$). Then the number of solutions of the system
of congruences represented by \eqref{congs eq} with the
restrictions $(x_{j}, m_{i})=t_{ij}$ for $1 \leq i \leq k$, $1 \leq
i \leq n$, is given by
\[
\frac{1}{m}\prod_{j=1}^{n} \frac{\varphi(\frac{m}{t_{j}})}{\varphi(\frac{m}{t_{j}d_{j}})}
\sum_{d \mid m} C_{d}(b) \prod_{l=1}^{n}C_{\frac{m}{t_{l}d_l}}\bigg(\frac{m}{d}\bigg).
\]
where $t_j= \prod_{i=1}^{k}t_{ij}$
and $d_j= \prod_{i=1}^{k}d_{ij}$ with $d_{ij}=(a_{ij}, \frac{m_i}{t_{ij}})$
for $1\le i\le k$, $1\le j\le n$.
Here $b$ represents the unique
solution of the following system of congruences

\begin{align}\label{uniq b}
x &\equiv b_{i} \pmod{m_{i}},1 \leq i \leq k.
\end{align}
\end{theorem}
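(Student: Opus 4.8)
The plan is to exploit the pairwise coprimality of the moduli to \emph{decouple} the whole problem into $k$ independent single-modulus restricted congruences, apply the single-congruence formula of Bibak et al.~\cite{BBVRL17} to each, and then reassemble the $k$ factors using the multiplicativity of $\varphi$ and of Ramanujan sums.

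First I would invoke the Chinese remainder theorem in the form $\mathbb{Z}_m^n \cong \prod_{i=1}^k \mathbb{Z}_{m_i}^n$, under which $\langle x_1,\dots,x_n\rangle$ corresponds to the tuple of its reductions $\langle x_1 \bmod m_i,\dots,x_n\bmod m_i\rangle_{1\le i\le k}$. The crucial observation is that both sets of constraints split along this decomposition: the $i$-th congruence $\sum_j a_{ij}x_j\equiv b_i \pmod{m_i}$ and the $i$-th family of restrictions $(x_j,m_i)=t_{ij}$ depend only on the residues $x_j\bmod m_i$, since $(x_j,m_i)=(x_j\bmod m_i,\,m_i)$. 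Consequently the solution set is in bijection with a product, and the number of solutions factors as $\prod_{i=1}^k N_{m_i}(b_i;t_{i1},\dots,t_{in})$, where $N_{m_i}$ counts the solutions of the single restricted congruence $\sum_j a_{ij}y_j\equiv b_i\pmod{m_i}$ with $(y_j,m_i)=t_{ij}$ in $\mathbb{Z}_{m_i}$.

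Next I would apply the theorem of Bibak et al.~\cite{BBVRL17} to each factor, writing
\[
N_{m_i}=\frac{1}{m_i}\prod_{j=1}^n\frac{\varphi(m_i/t_{ij})}{\varphi(m_i/(t_{ij}d_{ij}))}\sum_{\delta_i\mid m_i}C_{\delta_i}(b_i)\prod_{l=1}^n C_{m_i/(t_{il}d_{il})}(m_i/\delta_i),
\]
with $d_{ij}=(a_{ij},m_i/t_{ij})$, and multiply the $k$ expressions together. The prefactors give $\prod_i m_i^{-1}=m^{-1}$. For the $\varphi$-quotients I would reorder the double product over $i$ and $j$ and use that the integers $m_i/t_{ij}$ (respectively $m_i/(t_{ij}d_{ij})$) are pairwise coprime as $i$ varies, so that multiplicativity of $\varphi$ collapses $\prod_i \varphi(m_i/t_{ij})=\varphi(m/t_j)$ and $\prod_i\varphi(m_i/(t_{ij}d_{ij}))=\varphi(m/(t_jd_j))$, using $t_j=\prod_i t_{ij}$ and $d_j=\prod_i d_{ij}$. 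Expanding the product of the $k$ sums, a choice of $\delta_i\mid m_i$ for each $i$ corresponds bijectively to a single divisor $\delta=\prod_i\delta_i$ of $m$, turning $\prod_i\sum_{\delta_i\mid m_i}$ into $\sum_{\delta\mid m}$.

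The step that requires genuine care, and which I regard as the main obstacle, is showing that the Ramanujan-sum factors recombine correctly: after reindexing we face $\prod_i C_{\delta_i}(b_i)$ and, for each $l$, $\prod_i C_{m_i/(t_{il}d_{il})}(m_i/\delta_i)$, in which the \emph{arguments} $b_i$ and $m_i/\delta_i$ differ from factor to factor. Here I would use that $C_r(n)$ depends on $n$ only through $(n,r)$, together with the coprimality of the moduli. Since $\delta_i\mid m_i$ and $b\equiv b_i\pmod{m_i}$ force $(b,\delta_i)=(b_i,\delta_i)$, we get $C_{\delta_i}(b_i)=C_{\delta_i}(b)$, and multiplicativity of Ramanujan sums in the modulus yields $\prod_i C_{\delta_i}(b)=C_\delta(b)$. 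Likewise, writing $r_{il}=m_i/(t_{il}d_{il})$, the factor $r_{il}$ divides $m_i$ and is therefore coprime to every $m_{i'}/\delta_{i'}$ with $i'\ne i$; hence $(m/\delta,\,r_{il})=(m_i/\delta_i,\,r_{il})$, so $C_{r_{il}}(m_i/\delta_i)=C_{r_{il}}(m/\delta)$, and multiplicativity gives $\prod_i C_{r_{il}}(m/\delta)=C_{m/(t_ld_l)}(m/\delta)$ since $\prod_i r_{il}=m/(t_ld_l)$. Assembling the prefactor $1/m$, the $\varphi$-quotients, the sum over $\delta\mid m$, the factor $C_\delta(b)$, and the products $C_{m/(t_ld_l)}(m/\delta)$ then reproduces the asserted formula after renaming $\delta$ as $d$; the fact that $b$ enters only through the equalities $C_{\delta_i}(b)=C_{\delta_i}(b_i)$ is exactly why the CRT-combination \eqref{uniq b} of the $b_i$ is the quantity that appears.
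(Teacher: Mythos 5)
Your argument is correct, but it is organized differently from the paper's. You decompose first by modulus via the CRT isomorphism $\mathbb{Z}_m^n\cong\prod_i\mathbb{Z}_{m_i}^n$, apply the full main theorem of Bibak et al.\ (the general-coefficient formula, i.e.\ the $k=1$ case of the statement) to each of the $k$ independent single-modulus problems, and then recombine the $k$ closed-form answers using multiplicativity of $\varphi$ and of $C_r(\cdot)$ in the modulus together with the evenness property $C_r(a)=C_r((a,r))$; your handling of the mismatched arguments $b_i$ versus $b$ and $m_i/\delta_i$ versus $m/\delta$ is exactly the point that needs care, and your justification of it is sound. The paper instead first absorbs the coefficients by substituting $y_j\equiv a_{ij}x_j\pmod{m_i}$, uses Lemma \ref{lemma 6} to identify the induced restriction $(y_j,m_i)=t_{ij}d_{ij}$ and to count the fibre of each $y_j$ (contributing the factor $\prod_{i,j}\varphi(m_i/t_{ij})/\varphi(m_i/(t_{ij}d_{ij}))$), combines the resulting coefficient-free system into a single congruence modulo $m$ by CRT, and only then invokes the coefficient-free count of Lemma \ref{without coeff lem}. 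Thus the paper needs only Theorems 3.1 and 3.4 of \cite{BBVRL17} as input, whereas you take their stronger main theorem as a black box; in exchange your route makes the theorem a purely formal multiplicativity statement ("the count is multiplicative in the modulus, and the displayed formula is the unique multiplicative combination of the $k=1$ formulas"), which is arguably more transparent but proves slightly less from scratch. Both are complete proofs.
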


\section{Ramanujan Sums and DFT}
Let $e(x)$ denote $e^{2\pi ix}$. For integers $a$ and $m \geq 1$, the Ramanujan sum $C_{m}(a)$ is defined by
\begin{equation*}
    C_{m}(a)= \sum_{\substack{j=1 \\ (j, m)=1}}^{m}e\bigg(\frac{ja}{m}\bigg).
\end{equation*}
\iffalse
Ramanujan sums enjoy the following properties:
\begin{enumerate}[label=(\roman*)]
\item For fixed $a \in \mathbb{Z}$ the function $m \rightarrow C_{m}(a)$ is multiplicative, (i. e.) if $(m_{1}, m_{2})=1$, then $C_{m_{1}m_{2}}(a)=C_{m_{1}}(a)C_{m_{2}}(a)$.
\item The function $a \rightarrow C_{m}(a)$ is multiplicative for a fixed $m$ if and only if $\mu(m)=1$, where $\mu$ is the M\"{o}bius function.
\item $C_{m}(a)$ is integer-valued.
\item $C_{m}(a)$ is an even function of $a$, that is, $C_{m}(a)=C_{m}((a,m))$, for every $a, m$.
\item Ramanujan sums satisfy the following orthogonality property. If $m \geq 1$, $d_{1} \mid m$, and $d_{2} \mid m$, then we have
\begin{equation*}
   \sum_{d \mid m} C_{d_{1}}\bigg(\frac{m}{d}\bigg) C_{d}\bigg(\frac{m}{d_{2}}\bigg) = \begin{cases}
   m ,& \text{if } d_{1}=d_{2} ,\\
    0, & \text{if } d_{1} \neq d_{2}.
  \end{cases}
\end{equation*}
\item For integers $a$ and $m \geq 1$, we have
\begin{equation*}
    C_{m}(a)=\frac{\varphi(m)}{\varphi{\bigg(\frac{m}{(a, m)}\bigg)}} \mu\bigg(\frac{m}{(a, m)}\bigg).
\end{equation*}
\end{enumerate}
\fi

\noindent
It is well known that $C_{m}(a)$ is integer-valued and even function of $a$, that is, $C_{m}(a)=C_{m}((a,m))$, for every $a, m$. Moreover,  $C_{m}(a)$ has the following explicit formula:
\begin{equation}\label{explicit formula for RS}
 C_{m}(a)= \sum_{d \mid (a,m)} \mu\bigg(\frac{m}{d}\bigg) d,
\end{equation}
\iffalse
 Recall that a characteristic property of the M\"{o}bius function
\begin{equation*}
    \sum_{d \mid m} \mu(d)= \begin{cases}
    1 ,& \text{if } n=1 ,\\
     0, & \text{if } n >1.
    \end{cases}
\end{equation*}
\fi

Recall that an arithmetic function is a function $f: \mathbb{N} \rightarrow \mathbb{C}$. More generally, an arithmetic function of $n$ variables is a function  $f: \mathbb{N}^{n} \rightarrow \mathbb{C}$. Let $\mathcal{F}_{n}$ denotes the set of all arithmetic function of $n$ variables. If $f, g \in \mathcal{F}_{n}$, then their convolution is defined as
\begin{equation*}
(f\ast g)(m_{1}, \dots, m_{n})= \sum_{d_{1} \mid m_{1}, \dots, d_{n} \mid m_{n}} f(d_{1}, \dots, d_{n}) g\big(\frac{m_{1}}{d_{1}}, \dots, \frac{m_{n}}{d_{n}}\big).
\end{equation*}

The set $\mathcal{F}_{n}$ forms a ring with pointwise addition and convolution product with unit element $\varepsilon^{(n)}$ defined by
\begin{equation*}
\varepsilon^{(n)}(m_{1}, \dots, m_{n})=\begin{cases}
    1 ,& \ \ \text{if } \  m_{1}= \dots = m_{n}=1,\\
     0, & \ \ \text{otherwise}.
    \end{cases}
\end{equation*}
A function $f \in \mathcal{F}_{n}$ is invertible iff $f(1, \dots, 1) \neq 0$. In fact, we have the M\"{o}bius inversion formula:
\begin{lemma}\label{MIF lemma}
If $f,g \in \mathcal{F}_{n}$ satisfying
\begin{equation*}
f(m_{1}, \dots, m_{n})=\sum_{d_{1} \mid m_{1}, \dots, d_{n} \mid m_{n}} g(d_{1}, \dots, d_{n})
\end{equation*}
then
\begin{equation*}
g(m_{1}, \dots, m_{n})=\sum_{d_{1} \mid m_{1}, \dots, d_{n} \mid m_{n}} f(d_{1}, \dots, d_{n}) \mu(m_{1}/d_{1}) \cdots \mu(m_{n}/d_{n}).
\end{equation*}
Equivalently, the inverse of the constant function $1$ is given by $$\mu^{(n)}(m_{1}, \dots, m_{n})=\mu(m_{1}) \cdots \mu(m_{n}).$$
\end{lemma}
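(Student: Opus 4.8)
The plan is to recognize the statement as the assertion that the constant function $\mathbf{1}$, defined by $\mathbf{1}(m_{1},\dots,m_{n})=1$ for all arguments, and the function $\mu^{(n)}$ are mutually inverse in the convolution ring $\mathcal{F}_{n}$. First I would rewrite the hypothesis compactly: the relation $f(m_{1},\dots,m_{n})=\sum_{d_{1}\mid m_{1},\dots,d_{n}\mid m_{n}} g(d_{1},\dots,d_{n})$ is exactly $f=\mathbf{1}\ast g$, since inserting the constant value $\mathbf{1}(m_{i}/d_{i})=1$ into the convolution and reindexing $d_{i}\mapsto m_{i}/d_{i}$ reproduces the displayed sum. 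Likewise the desired conclusion is precisely $g=\mu^{(n)}\ast f$. By the invertibility criterion already recorded in the text, $\mathbf{1}$ is a unit of $\mathcal{F}_{n}$ because $\mathbf{1}(1,\dots,1)=1\neq 0$; the whole content of the lemma is thus to identify its inverse, i.e.\ to prove the single identity $\mathbf{1}\ast\mu^{(n)}=\varepsilon^{(n)}$.

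The heart of the argument is a direct computation of this convolution. For any $(m_{1},\dots,m_{n})$ I would write
\[
(\mathbf{1}\ast\mu^{(n)})(m_{1},\dots,m_{n})=\sum_{d_{1}\mid m_{1},\dots,d_{n}\mid m_{n}}\mu(d_{1})\cdots\mu(d_{n})=\prod_{i=1}^{n}\Big(\sum_{d_{i}\mid m_{i}}\mu(d_{i})\Big),
\]
where the crucial step is that the multi-index divisor sum factors as a product over the coordinates, because the summand $\mu(d_{1})\cdots\mu(d_{n})$ is itself a product and the divisibility conditions $d_{i}\mid m_{i}$ are independent across $i$. Each inner factor is the classical one-variable M\"{o}bius identity, equal to $1$ when $m_{i}=1$ and $0$ otherwise. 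Hence the product vanishes unless every $m_{i}=1$, in which case it equals $1$; this is exactly $\varepsilon^{(n)}(m_{1},\dots,m_{n})$.

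Finally I would combine these observations using the associativity and commutativity of the convolution, which are guaranteed by the ring structure of $\mathcal{F}_{n}$: from $f=\mathbf{1}\ast g$ we obtain $\mu^{(n)}\ast f=\mu^{(n)}\ast(\mathbf{1}\ast g)=(\mu^{(n)}\ast\mathbf{1})\ast g=\varepsilon^{(n)}\ast g=g$, which is the claimed inversion formula and simultaneously exhibits $\mu^{(n)}$ as the convolution inverse of $\mathbf{1}$. There is no serious obstacle here; the only point demanding care is the coordinatewise factorization of the $n$-fold divisor sum and the reduction of the entire problem to the one-variable fact $\sum_{d\mid m}\mu(d)=1$ for $m=1$ and $0$ otherwise, which I would cite as standard (it is also the $n=1$ instance of the lemma, provable directly by evaluating the multiplicative function $m\mapsto\sum_{d\mid m}\mu(d)$ on prime powers).
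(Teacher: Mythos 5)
Your proof is correct, and complete in every step: the identification of the hypothesis with $f=\mathbf{1}\ast g$, the coordinatewise factorization giving $\mathbf{1}\ast\mu^{(n)}=\varepsilon^{(n)}$ via the one-variable identity $\sum_{d\mid m}\mu(d)=\varepsilon(m)$, and the appeal to associativity to conclude $g=\mu^{(n)}\ast f$. The paper itself states this lemma without proof, treating it as a standard fact about the convolution ring $\mathcal{F}_{n}$, and your argument is exactly the standard one that the authors implicitly rely on, so there is nothing to fault and nothing materially different to compare.
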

 An arithmetic function $f$ is said to be periodic with period $r$ (or $r$-periodic) for some $r \in \mathbb{N}$ if for every $m \in \mathbb{Z}$, $f(m+r)=f(m)$.
 For an $r$-periodic arithmetic function $f$, its discrete (finite)
 Fourier transform (DFT) is defined to be the function
\begin{equation*}
\hat{f}(b)= \sum_{j=1}^{m} f(j) e\bigg(\frac{-bj}{m}\bigg), \  \ \text{for} \
 \ b \in \mathbb{Z}.
\end{equation*}

\noindent
A Fourier representation of $f$ is given by
\begin{equation*}
f(b)= \frac{1}{m}\sum_{j=1}^{m} \hat{f}(j) e\bigg(\frac{bj}{m}\bigg), \  \ \text{for} \
 \ b \in \mathbb{Z}.
\end{equation*}

Recall that an arithmetic function $f$ is $r$-even if $f(m)=f((m,r))$, for every $m \in \mathbb{Z}$. Clearly, if a function $f$ is $r$-even, then it is $r$-periodic. Further, for an $r$-even function $f$, we have
\begin{align*}
\hat{f}(b)=& \sum_{k=1}^{r} f(k) e\bigg(\frac{-bk}{r}\bigg)= \sum_{d \mid r} \sum_{\substack{1 \leq j \leq \frac{r}{d} \\ (j, \frac{r}{d})=1}}f(d) e\bigg(\frac{-bdj}{r}\bigg)= \sum_{d \mid r} f(d) C_{\frac{r}{d}}(b).
\end{align*}

The Cauchy convolution of two $r$-periodic functions $f$ and $g$ is
defined to be
$$(f \otimes g)(n)= \sum_{\substack{1 \leq x, y \leq m \\ x+y \equiv n \pmod{m}}} f(x) g(y)=\sum_{x=1}^{m} f(x) g(n-x), \ \text{for} \ m \in \mathbb{Z}.$$
Similarly, we can define the Cauchy convolution of a finite number of $r$-periodic functions. It is easy to observe that the discrete Fourier transform of the Cauchy convolution satisfies the relation $$\widehat{f \otimes g} = \hat{f} \hat{g},$$ with pointwise multiplication.
%%%%%%%%%%%%%%%%%%%%%%%%%%%%%%%%%%%%%%%%%%%%%%%%%%%%%%%%%%%%%%
\section{Preliminary Lemmas}
In this section, we list several elementary lemmas required for the
proof of theorems 2 and 3. 
\subsection{Some properties of GCD and LCM}\label{lcm and gcd sec}
For a prime $p$ and a nonnegative integer
$k$, by the notation $p^{k} \Vert a$, we mean $p^{k} \mid a$ and
$p^{k+1} \nmid a$.
\begin{lemma}\label{coprime lcmgcd lem}
Let $m, m_{1}, m_{2}$ be positive integers such that $(m_{1},m_{2})=1$. Then, we have
\begin{enumerate}[label=(\roman*)]
\item $(m_{1}m_{2}, m)=(m_{1},m)\cdot (m_{2}, m)$. \label{gcd coprime}
\item $[m_{1}m_{2}, m]=\frac{[m_{1},m]\cdot [m_{2}, m]}{m}$. \label{lcm coprime}
\end{enumerate}
\end{lemma}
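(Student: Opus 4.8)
The plan is to verify both identities one prime at a time, using the notation $p^{k}\Vert a$ just introduced. Fix a prime $p$ and write $p^{\alpha}\Vert m_{1}$, $p^{\beta}\Vert m_{2}$, $p^{\gamma}\Vert m$. Recall the elementary facts that the exponent of $p$ in a product is the sum of the exponents, the exponent of $p$ in a gcd is the minimum of the exponents, and in an lcm is the maximum. Since two positive integers are equal precisely when they have the same exponent of $p$ for every prime $p$, it suffices to compare these exponents on both sides of each identity.

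For part \ref{gcd coprime}, the exponent of $p$ in $(m_{1}m_{2},m)$ is $\min(\alpha+\beta,\gamma)$, whereas the exponent of $p$ in $(m_{1},m)\cdot(m_{2},m)$ is $\min(\alpha,\gamma)+\min(\beta,\gamma)$. The hypothesis $(m_{1},m_{2})=1$ forces $\min(\alpha,\beta)=0$ at every prime, so at least one of $\alpha,\beta$ vanishes. Assuming without loss of generality that $\beta=0$, both expressions collapse to $\min(\alpha,\gamma)$, which establishes \ref{gcd coprime}.

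For part \ref{lcm coprime}, rather than repeat the valuation computation I would deduce it algebraically from \ref{gcd coprime} together with the standard identity $[a,b]\,(a,b)=ab$. Indeed, $[m_{1}m_{2},m]=\frac{m_{1}m_{2}m}{(m_{1}m_{2},m)}=\frac{m_{1}m_{2}m}{(m_{1},m)(m_{2},m)}$ by \ref{gcd coprime}, while expanding the right-hand side of \ref{lcm coprime} gives $\frac{1}{m}\cdot\frac{m_{1}m}{(m_{1},m)}\cdot\frac{m_{2}m}{(m_{2},m)}$, which is the same quantity.

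There is no genuine obstacle in this lemma; the single point requiring care is the use of coprimality in \ref{gcd coprime}, namely that $(m_{1},m_{2})=1$ guarantees the two minima $\min(\alpha,\gamma)$ and $\min(\beta,\gamma)$ never both contribute a positive exponent at the same prime. Once \ref{gcd coprime} is in hand, \ref{lcm coprime} is a one-line algebraic consequence.
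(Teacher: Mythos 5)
Your proof is correct and follows the same route as the paper: part (ii) is deduced from part (i) together with the identity $[a,b]\,(a,b)=ab$, exactly as the paper does. For part (i) the paper simply declares the equality trivial, and your prime-by-prime valuation argument (using that coprimality forces one of the two exponents to vanish) is just the natural filling-in of that step.
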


\vspace{1mm}
\noindent
The equality \ref{gcd coprime} is trivial and \ref{lcm coprime} follows from the identity $[m,n]\cdot(m,n)=mn$ and \ref{gcd coprime}.

\begin{lemma}\label{1var lcmgcd lem}
Let $m, a_{1}, a_{2}, \dots, a_{n}$ be positive integers and $d_{i}=(a_{i},m)$, for $1 \le i \leq n$. Then, we have
\begin{equation*}
\bigg[\frac{m}{d_{1}}, \dots, \frac{m}{d_{n}}\bigg]= \frac{m}{(a_{1}, \dots, a_{n}, m)}.
\end{equation*}
\end{lemma}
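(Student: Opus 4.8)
The plan is to verify the identity one prime at a time, comparing the $p$-adic valuations of the two sides; since two positive integers agree exactly when they have the same valuation at every prime, this suffices. Write $v_p$ for the $p$-adic valuation, so that $p^{v_p(a)} \Vert a$, and recall the elementary facts $v_p((u,v)) = \min(v_p(u), v_p(v))$ and $v_p([u,v]) = \max(v_p(u), v_p(v))$, both of which extend to any finite collection of integers.

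Fix a prime $p$ and set $\alpha = v_p(m)$ and $\beta_i = v_p(a_i)$ for $1 \le i \le n$. Since $d_i = (a_i, m)$, we have $v_p(d_i) = \min(\beta_i, \alpha)$, and therefore
\[
v_p\!\left(\frac{m}{d_i}\right) = \alpha - \min(\beta_i, \alpha) = \max(\alpha - \beta_i, 0).
\]
Passing to the least common multiple on the left-hand side of the asserted identity gives
\[
v_p\!\left(\left[\frac{m}{d_1}, \dots, \frac{m}{d_n}\right]\right) = \max_{1 \le i \le n} \max(\alpha - \beta_i, 0) = \max\!\left(0,\, \alpha - \min_{1 \le i \le n}\beta_i\right),
\]
where the last step uses that the value $0$ is simply absorbed into the outer maximum.

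On the right-hand side, the gcd appearing in the denominator has valuation $v_p\big((a_1, \dots, a_n, m)\big) = \min(\beta_1, \dots, \beta_n, \alpha)$, so that
\[
v_p\!\left(\frac{m}{(a_1, \dots, a_n, m)}\right) = \alpha - \min\!\left(\min_{1 \le i \le n}\beta_i,\, \alpha\right) = \max\!\left(\alpha - \min_{1 \le i \le n}\beta_i,\, 0\right).
\]
This matches the valuation computed for the left-hand side, so the two integers have equal valuation at every prime $p$ and are therefore equal. I expect no genuine obstacle here: the only point needing a moment of care is the elementary bookkeeping identity $\max_i \max(x_i, 0) = \max(\max_i x_i, 0)$ (applied with $x_i = \alpha - \beta_i$), and everything else is the routine dictionary translating gcd and lcm into minima and maxima of valuations.
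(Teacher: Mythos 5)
Your proof is correct and follows essentially the same route as the paper's: both arguments fix a prime $p$ and compare the $p$-adic valuations of the two sides, translating gcd and lcm into minima and maxima of exponents. Your version merely spells out a little more explicitly (via $v_p(m/d_i)=\max(\alpha-\beta_i,0)$ and the identity $\max_i\max(x_i,0)=\max(\max_i x_i,0)$) the bookkeeping that the paper leaves implicit.
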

\begin{proof}
\iffalse
We prove this lemma by induction on $n$. For $n=1$, the claim is trivial. Now, we prove the claim for $n=2$. Let $p$ be any prime dividing $m$. Suppose $p^{k} \Vert m, p^{k_{1}} \Vert d_{1}$ and $p^{k_{2}} \Vert d_{2}$, then we have
$$p^{k-\min\{k_{1},k_{2}\}}=p^{\max\{k-k_{1},k-k_{2}\}} \bigg\Vert \bigg[\frac{m}{d_{1}}, \frac{m}{d_{2}}\bigg].$$
On the other hand, $p^{k-\min\{k_{1},k_{2}\}} \Vert (a_{1}, a_{2}, m)$ implies
$$p^{k-\min\{k_{1},k_{2}\}} \bigg\Vert \frac{m}{(a_{1}, a_{2}, m)}.$$
This proves the claim for $n=2$. Now, assuming that the lemma holds for $r \leq n-1$, we can use the associativity of LCM and the induction hypothesis to deduce:
\begin{align*}
\bigg[\frac{m}{d_{1}}, \dots, \frac{m}{d_{n}}\bigg]=\bigg[\bigg[\frac{m}{d_{1}}, \dots, \frac{m}{d_{n-1}}\bigg],\frac{m}{d_{n}}\bigg]= \bigg[\frac{m}{(a_{1}, \dots, a_{n-1}, m)},\frac{m}{(a_{n},m)}\bigg]=\frac{m}{(a_{1}, \dots, a_{n}, m)}.
\end{align*}
\fi
Let $p$ be any prime dividing $m$. Suppose $p^{k} \Vert m$ and $p^{k_{1}} \Vert d_{1}, \dots, p^{k_{n}} \Vert d_{n}$, then we have
$$p^{k-\min\{k_{1}, \dots, k_{n}\}}=p^{\max\{k-k_{1}, \dots, k-k_{n}\}} \bigg\Vert \bigg[\frac{m}{d_{1}}, \dots, \frac{m}{d_{n}}\bigg].$$
On the other hand, from $p^{\min\{k_{1}, \dots, k_{n}\}} \Vert (a_{1}, \dots, a_{n}, m)$, it follows that
$$p^{k-\min\{k_{1}, \dots, k_{n}\}} \bigg\Vert \frac{m}{(a_{1}, \dots, a_{n}, m)}.$$
\end{proof}

\begin{lemma}\label{lcmgcd lem}
Let $k$ and $n$ be arbitrary positive integers and $a_{ij}$ are integers (for $1 \leq i \leq k$, $1 \leq j \leq n$). Let $m_{1}, \dots, m_{k}$ be pairwise coprime integers and $d_{ij}=(a_{ij}, m_{i})$ (for $1 \leq i \leq k$, $1 \leq j \leq n$). Then, we have
\begin{equation*}
\bigg[\frac{m_{1} \cdots m_{k}}{d_{11}\cdots d_{k1}}, \dots, \frac{m_{1} \cdots m_{k}}{d_{1n}\cdots d_{kn}}\bigg]= \frac{m_{1} \cdots m_{k}}{\ell_{1} \cdots \ell_{k}},
\end{equation*}
where $\ell_{i}=(a_{i1}, \dots, a_{in}, m_{i}),$ for $1 \leq i \leq k$.
\end{lemma}
\begin{proof}
We prove this lemma by induction on $n$. For $n=1$, the claim is trivial. Now, we prove the claim for $n=2$. Since $m_{1}, \dots, m_{k}$ are pairwise coprime integers, by using Lemma \ref{coprime lcmgcd lem}-\ref{lcm coprime} $(k-1)$ times successively, we write
\begin{align*}
\bigg[\frac{m_{1} \cdots m_{k}}{d_{11}\cdots d_{k1}}, \frac{m_{1} \cdots m_{k}}{d_{12}\cdots d_{k2}}\bigg]=&\bigg[\frac{m_{1}}{d_{11}}, \frac{m_{1} \cdots m_{k}}{d_{12}\cdots d_{k2}}\bigg] \cdots \bigg[\frac{m_{k}}{d_{k1}}, \frac{m_{1} \cdots m_{k}}{d_{12}\cdots d_{k2}}\bigg]\times \bigg(\frac{d_{12}\cdots d_{k2}}{m_{1} \cdots m_{k}}\bigg)^{k-1}\\
=&\bigg[\frac{m_{1}}{d_{11}}, \frac{m_{1}}{d_{12}}\bigg] \cdots \bigg[\frac{m_{k}}{d_{k1}}, \frac{m_{k}}{d_{k2}}\bigg]\times \bigg(\frac{m_{1} \cdots m_{k}}{d_{12}\cdots d_{k2}}\bigg)^{k-1}\times \bigg(\frac{d_{12}\cdots d_{k2}}{m_{1} \cdots m_{k}}\bigg)^{k-1}\\
=&\bigg[\frac{m_{1}}{d_{11}}, \frac{m_{1}}{d_{12}}\bigg] \cdots \bigg[\frac{m_{k}}{d_{k1}}, \frac{m_{k}}{d_{k2}}\bigg].
\end{align*}
Thus, the required equality follows from Lemma \ref{1var lcmgcd lem}. This proves the claim for $n=2$. Now, assuming that the lemma holds for $r \leq n-1$, we can use the associativity of LCM and the induction hypothesis to deduce:
\begin{align*}
\bigg[\frac{m_{1} \cdots m_{k}}{d_{11}\cdots d_{k1}}, \dots, \frac{m_{1} \cdots m_{k}}{d_{1n}\cdots d_{kn}}\bigg]=&\bigg[\bigg[\frac{m_{1} \cdots m_{k}}{d_{11}\cdots d_{k1}}, \dots, \frac{m_{1} \cdots m_{k}}{d_{1(n-1)}\cdots d_{k(n-1)}}\bigg], \frac{m_{1} \cdots m_{k}}{d_{1n}\cdots d_{kn}}\bigg]\\
=&\bigg[\frac{m_{1} \cdots m_{k}}{\ell'_{1}\cdots \ell'_{k}}, \frac{m_{1} \cdots m_{k}}{d_{1n}\cdots d_{kn}}\bigg],
\end{align*}
where $\ell'_{i}=(a_{i1}, \dots, a_{i(n-1)}, m_{i}),$ for $1 \leq i \leq k$. Since $m_{1}, \dots, m_{k}$ are pairwise coprime integers, by following a similar argument as in the case of $n=2$, we obtain
$$\bigg[\frac{m_{1} \cdots m_{k}}{\ell'_{1}\cdots \ell'_{k}}, \frac{m_{1} \cdots m_{k}}{d_{1n}\cdots d_{kn}}\bigg]=\bigg[\frac{m_{1}}{\ell'_{1}}, \frac{m_{1}}{d_{1n}}\bigg] \cdots \bigg[\frac{m_{k}}{\ell'_{k}}, \frac{m_{k}}{d_{kn}}\bigg].$$
It follows from Lemma \ref{1var lcmgcd lem} that
$$\bigg[\frac{m_{1}}{\ell'_{1}}, \frac{m_{1}}{d_{1n}}\bigg] \cdots \bigg[\frac{m_{k}}{\ell'_{k}}, \frac{m_{k}}{d_{kn}}\bigg]=\frac{m_{1} \cdots m_{k}}{\ell_{1} \cdots \ell_{k}},$$
where $\ell_{i}=(a_{i1}, \dots, a_{in}, m_{i}),$ for $1 \leq i \leq k$. This completes the proof of Lemma \ref{lcmgcd lem}.
\end{proof}

\vspace{1mm}
\noindent
\subsection{Some applications of Ramanujan Sums and DFT}
For $t \mid r$, let $\varrho_{r,t}$ be the $r$-periodic function defined for every $k \in \mathbb{Z}$ by
\begin{equation*}
    \varrho_{r,t}(k)= \begin{cases}
    1 ,& \text{if } (k, r)=t ,\\
     0, & \text{if } (k, r) \neq t.
    \end{cases}
\end{equation*}
\iffalse
\begin{lemma}(Theorem 2.5, \cite{BBVRL17})\label{DFT of rho}
For every $t \mid r$,
$$\widehat{\varrho_{r,t}}(k)=C_{\frac{r}{t}}(k) \ \text{for } k \in \mathbb{Z},$$
in particular, the Ramanujan sum $k \rightarrow C_{m}(k)$ is the DFT of the function $k \rightarrow \varrho_{r,1}(k)$.
\end{lemma}
\fi
%%%%%%%%%%%%%%%%%%%%%%%%%%%%%%%%%%%%%%%%%%%%%%%%%%%%%%%%%%%%%%%%%
It follows from (Theorem 2.5, \cite{BBVRL17}) that the Ramanujan sum $k \rightarrow C_{r}(k)$ is the DFT of the function $k \rightarrow \varrho_{r,1}(k)$. More generally, $$\widehat{\varrho_{r,t}}(k)=C_{\frac{r}{t}}(k) \ \text{for } k \in \mathbb{Z}.$$ Recall that $N_{m}(b; t_{1}, \dots, t_{n})$ denotes the number of incongruent solutions of the restricted linear congruence \eqref{restr lin cong}.
\iffalse
As a consequence of one variable Chinese remainder theorem, it follows that (see, (3.2) of \cite{BBVRL17}) for $m_{1}, m_{2} \geq 1$, $(m_{1},m_{2})=1$, then
\begin{equation*}
N_{m_{1}m_{2}}(b; t_{1}, \dots, t_{n})=N_{m_{1}}(b; u_{1}, \dots, u_{n}) N_{m_{2}}(b; v_{1}, \dots, v_{n}),
\end{equation*}
with unique $u_{i},v_{i}$ such that $t_{i}=u_{i}v_{i}$, $u_{i} \mid m_{1}$,  $v_{i} \mid m_{2}$ for $1 \leq i \leq n$.
\fi
The following lemma is a consequence of the DFT of $\varrho_{r,t}$ and Cauchy convolution, which provides a formula for $N_{m}(b; t_{1}, \dots, t_{n})$ in terms of Ramanujan's sum when $a_{i}=1$, for $1 \leq i \leq n$.
\begin{lemma}(Theorem 3.4, \cite{BBVRL17})\label{without coeff lem}
Let $b,m \geq 1$, $t_{i} \mid m$ for $1 \leq i \leq n$ be given integers. The number of solutions of the linear congruence $x_{1}+ \dots+x_{n} \equiv b \pmod m$, with $(x_{i}, m)=t_{i}$ for $1 \leq i \leq n$, is
\begin{equation*}
N_{m}(b; t_{1}, \dots, t_{n})= \frac{1}{m} \sum_{j=1}^{m} C_{\frac{m}{t_{1}}}(j)\cdots C_{\frac{m}{t_{n}}}(j) e\bigg(\frac{bj}{m}\bigg) =\frac{1}{m} \sum_{d \mid m} C_{d}(b) \prod_{i=1}^{n}C_{\frac{m}{t_{i}}}\bigg(\frac{m}{d}\bigg) \geq 0.
\end{equation*}
\end{lemma}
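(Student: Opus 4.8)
The plan is to prove the formula in Lemma~\ref{without coeff lem} by expressing the counting function $N_m(b; t_1, \dots, t_n)$ as a Cauchy convolution of the indicator functions $\varrho_{m,t_i}$ and then applying the discrete Fourier transform. First I would observe that a tuple $(x_1, \dots, x_n)$ with each $x_j$ running over residues modulo $m$ contributes to $N_m(b; t_1, \dots, t_n)$ precisely when $(x_j, m) = t_j$ for every $j$ \emph{and} $x_1 + \cdots + x_n \equiv b \pmod m$. The first set of conditions is exactly $\prod_{j=1}^n \varrho_{m, t_j}(x_j) = 1$ (and $0$ otherwise), so that
\begin{equation*}
N_m(b; t_1, \dots, t_n) = \sum_{\substack{1 \le x_1, \dots, x_n \le m \\ x_1 + \cdots + x_n \equiv b \pmod m}} \varrho_{m, t_1}(x_1) \cdots \varrho_{m, t_n}(x_n) = \big(\varrho_{m, t_1} \otimes \cdots \otimes \varrho_{m, t_n}\big)(b),
\end{equation*}
which is just the value at $b$ of the $n$-fold Cauchy convolution.

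Next I would invoke the Fourier representation of an $m$-periodic function together with the convolution-to-product property $\widehat{f \otimes g} = \hat f \, \hat g$ recorded in the excerpt. Applying this iteratively gives $\widehat{\varrho_{m,t_1} \otimes \cdots \otimes \varrho_{m,t_n}} = \prod_{i=1}^n \widehat{\varrho_{m,t_i}}$, and by the stated DFT identity $\widehat{\varrho_{m,t_i}}(j) = C_{m/t_i}(j)$. Writing out the Fourier inversion formula at the point $b$ then yields
\begin{equation*}
N_m(b; t_1, \dots, t_n) = \frac{1}{m} \sum_{j=1}^m \Big(\prod_{i=1}^n C_{\frac{m}{t_i}}(j)\Big) e\Big(\frac{bj}{m}\Big),
\end{equation*}
which is the first expression in the lemma.

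Finally I would convert this character sum into the divisor sum. Since each Ramanujan sum $C_{m/t_i}(j)$ is an $m$-even function of $j$, the product $\prod_i C_{m/t_i}(j)$ depends only on $(j,m)$; grouping the $j$ with $(j,m) = m/d$ for each divisor $d \mid m$ and recognizing that $\sum_{(j,m)=m/d} e(bj/m)$ is itself a Ramanujan sum $C_d(b)$ gives
\begin{equation*}
N_m(b; t_1, \dots, t_n) = \frac{1}{m} \sum_{d \mid m} C_d(b) \prod_{i=1}^n C_{\frac{m}{t_i}}\Big(\frac{m}{d}\Big).
\end{equation*}
The nonnegativity is automatic, since the left side counts solutions. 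I expect the main obstacle to be the bookkeeping in this last regrouping: one must correctly match the condition $(j,m) = m/d$ with the evaluation of the even functions at $m/d$ and identify the resulting inner sum over $j$ as $C_d(b)$, taking care that the indexing by divisors $d \mid m$ is consistent throughout. The two preceding steps are essentially formal once the convolution identity and the DFT of $\varrho_{m,t}$ are in hand.
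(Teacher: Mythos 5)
Your proposal is correct and follows exactly the route the paper indicates: it states this lemma as Theorem 3.4 of \cite{BBVRL17} without reproving it, describing it precisely as ``a consequence of the DFT of $\varrho_{r,t}$ and Cauchy convolution,'' which is the decomposition $N_m(b;t_1,\dots,t_n)=(\varrho_{m,t_1}\otimes\cdots\otimes\varrho_{m,t_n})(b)$ followed by $\widehat{f\otimes g}=\hat f\hat g$, $\widehat{\varrho_{m,t_i}}=C_{m/t_i}$, and Fourier inversion that you carry out. Your final regrouping of the $j$ with $(j,m)=m/d$ into $C_d(b)$ is also the standard way the character sum is converted to the divisor sum, so the argument is complete and matches the intended proof.
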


\begin{lemma}(Theorem 3.1, \cite{BBVRL17})\label{lemma 6}
Let $a,b, m \geq 1$ and $t \geq 1$ be the given integers. The congruence $ax \equiv b
\pmod{m}$ has a solution $x$ with $(x, m)=t$ if and only if $t \mid (b,m)$ and $\big(a,
\tfrac{m}{t}\big)=\big(\tfrac{b}{t}, \tfrac{m}{t}\big)$. Furthermore, if these
conditions are satisfied, then there are exactly
\begin{equation*}
\frac{\varphi(\tfrac{m}{t})}{\varphi{(\tfrac{m}{td})}}=d \prod_{\substack{p \mid d \\ p \nmid \frac{m}{td}}} \bigg(1- \frac{1}{p}\bigg)
\end{equation*}
solutions, where $p$ ranges over the primes and $d=\big(a, \tfrac{m}{t}\big)=\big(\tfrac{b}{t}, \tfrac{m}{t}\big)$.
\end{lemma}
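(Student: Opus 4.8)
The plan is to strip off the restriction $(x,m)=t$ by a substitution, reduce to a coprimality problem modulo $m/t$, and then count using the structure of the unit group. First I would note that $t=(x,m)$ forces $t\mid m$ and $t\mid x$, so I write $x=tx'$; since $(tx',tM)=t(x',M)$ with $M:=m/t$, the condition $(x,m)=t$ becomes exactly $(x',M)=1$. Reducing $ax\equiv b\pmod m$ and using $t\mid m$, $t\mid ax$ shows $t\mid b$, so $t\mid(b,m)$ is necessary; granting this and setting $b'=b/t$, dividing the congruence $a(tx')\equiv tb'\pmod{tM}$ through by $t$ yields the equivalent problem
\[
a x' \equiv b' \pmod{M}, \qquad (x',M)=1.
\]

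Next I would determine when a coprime solution exists. Put $d=(a,M)$ and write $a=da_1$, $M=dM_1$ with $(a_1,M_1)=1$. The plain congruence $ax'\equiv b'\pmod M$ is solvable iff $d\mid b'$; writing $b'=db_1$ it becomes $a_1x'\equiv b_1\pmod{M_1}$, whose unique solution is $x'\equiv c\pmod{M_1}$ with $c=a_1^{-1}b_1$. Since $a_1$ is a unit mod $M_1$, we have $(c,M_1)=(b_1,M_1)$, so a solution coprime to $M$ (equivalently to $M_1$) can exist only if $(b_1,M_1)=1$; conversely, if $(b_1,M_1)=1$ then $c$ is a unit mod $M_1$ and lifts to a unit mod $M$. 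The identity that ties this to the hypothesis is $(b',M)=(db_1,dM_1)=d\,(b_1,M_1)$: thus ``$d\mid b'$ and $(b_1,M_1)=1$'' is precisely ``$(b',M)=d=(a,M)$'', i.e. $(a,m/t)=(b/t,m/t)$.

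For the count, under these hypotheses the admissible $x'$ modulo $M$ are exactly the units of $\mathbb{Z}/M\mathbb{Z}$ that reduce to the fixed unit $c$ modulo $M_1$. The reduction map $(\mathbb{Z}/M\mathbb{Z})^\times\to(\mathbb{Z}/M_1\mathbb{Z})^\times$ is a surjective group homomorphism, so all its fibers have the same size $\varphi(M)/\varphi(M_1)$; as $x=tx'$ is a bijection between admissible $x'$ mod $M$ and solutions $x$ mod $m$, there are exactly $\varphi(M)/\varphi(M_1)=\varphi(m/t)/\varphi(m/(td))$ solutions. Finally I would match this with the product form prime by prime: since $M=dM_1$ and $\varphi$ is multiplicative, a prime $p\mid d$ contributes $p^{v_p(d)}$ when $p\mid M_1$ and $p^{v_p(d)}(1-1/p)$ when $p\nmid M_1$, while primes $p\nmid d$ contribute $1$; multiplying gives $d\prod_{p\mid d,\,p\nmid M_1}(1-1/p)$, and $M_1=m/(td)$ produces the displayed formula.

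The main obstacle is the existence step: recognizing that the existence of a \emph{coprime} solution is governed by the single clean condition $(a,M)=(b',M)$. One must carry along two separate requirements --- that the congruence is solvable at all ($d\mid b'$) and that its forced residue class mod $M_1$ is a unit ($(b_1,M_1)=1$) --- and see that the gcd identity $(b',M)=d\,(b_1,M_1)$ fuses them into the stated hypothesis. Once this is in place, the counting follows routinely from the surjectivity and uniform fiber size of the unit-group reduction map, and the agreement of the two displayed expressions for the number of solutions is a purely multiplicative computation.
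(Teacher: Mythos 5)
Your proof is correct. Note, however, that the paper does not prove this statement at all: it is imported verbatim as Theorem 3.1 of Bibak et al.\ \cite{BBVRL17}, so there is no in-paper argument to compare against. What you have written is a sound, self-contained proof along the standard lines (and essentially the same reduction used in the cited reference): the substitution $x=tx'$ converting the restriction $(x,m)=t$ into $(x',M)=1$ with $M=m/t$, the observation that the solutions of $ax'\equiv b'\pmod M$ form a single residue class $c$ modulo $M_1=M/d$, the gcd identity $(b',M)=d\,(b_1,M_1)$ fusing solvability with the coprimality of $c$, and the count $\varphi(M)/\varphi(M_1)$ via the surjectivity and uniform fibre size of $(\mathbb{Z}/M\mathbb{Z})^\times\to(\mathbb{Z}/M_1\mathbb{Z})^\times$. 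The final prime-by-prime verification of the product formula is also correct. The one phrase worth tightening is the parenthetical ``coprime to $M$ (equivalently to $M_1$)'': these are not equivalent conditions on an integer in general, and what you actually use is only the implication $(x',M)=1\Rightarrow(x',M_1)=1$ in one direction and the surjectivity of the unit-group reduction in the other, which is exactly how your argument proceeds.
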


\vspace{2mm} \noindent 
For any $m_{1}, \dots, m_{n}, b \in \mathbb{N}$ and for any $m \in \mathbb{N}$ such
 that $[m_{1}, \dots, m_{n}] \mid m$, we define
\begin{equation}\label{Ebm defn}
E(b; m_{1}, \dots, m_{n}): = \frac{1}{m} \sum_{j=1}^{m} C_{m_{1}}(j) \cdots C_{m_{n}}(j)e\bigg(\frac{bj}{m}\bigg).
\end{equation}
We would like to mention that the special case of $b=0$ gives the orbicyclic (multivariate arithmetic) function  
$$E(m_{1}, \dots, m_{n})=\frac{1}{m} \sum_{j=1}^{m} C_{m_{1}}(j) \cdots C_{m_{n}}(j),$$ 
established in \cite{VL10}. The orbicyclic function, $E(m_{1}, \dots, m_{n})$, has very interesting combinatorial and topological applications, particularly in counting non-isomorphic mappings on orientable surfaces, and was studied in \cite{VL10, PJM76, MN06}. 
%%%%%%%%%%%%%%%%%%%%%%%%%%%%%%%%%%%%%%%%%%%%%%%%%%%%%%%%%%%%%%%%%%%%%
\begin{lemma}\label{MI of E}
For any $m_{1}, \dots, m_{n}, b \in \mathbb{N}$, we have
\begin{equation*}
\sum_{d_{1} \mid m_{1}, \dots, d_{n} \mid m_{n}} E(b;d_{1}, \dots, d_{n})= J(b; m_{1}, \dots, m_{n}),
\end{equation*}
where
\begin{equation*}
J(b; m_{1}, \dots, m_{n})= \begin{cases}
    \frac{m_{1} \cdots m_{n}}{[m_{1}, \dots, m_{n}]} ,& \text{if }   \frac{m}{[m_{1}, \dots, m_{n}]} \mid b\\
     0, & \text{otherwise}.
    \end{cases}
\end{equation*}
\end{lemma}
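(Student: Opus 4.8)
The plan is to expand the definition of $E$, interchange the finite sums so that the divisor sums factor, and then apply a standard Ramanujan-sum identity to turn everything into two elementary complete exponential sums. First I would fix a common modulus $m$ with $[m_{1}, \dots, m_{n}] \mid m$; this is legitimate for every tuple appearing on the left, since $d_{i} \mid m_{i}$ forces $[d_{1}, \dots, d_{n}] \mid [m_{1}, \dots, m_{n}] \mid m$, so the same $m$ serves in \eqref{Ebm defn} for each $(d_{1}, \dots, d_{n})$. Writing out each term and swapping the (finite) order of summation gives
\[
\sum_{d_{1} \mid m_{1}, \dots, d_{n} \mid m_{n}} E(b; d_{1}, \dots, d_{n}) = \frac{1}{m}\sum_{j=1}^{m} e\Big(\frac{bj}{m}\Big) \prod_{i=1}^{n} \Big(\sum_{d_{i} \mid m_{i}} C_{d_{i}}(j)\Big).
\]

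The key step is the divisor-sum identity $\sum_{d \mid N} C_{d}(j) = N$ when $N \mid j$ and $0$ otherwise. I would prove it by partitioning the complete exponential sum $\sum_{a=1}^{N} e(aj/N)$ according to $g=(a,N)$: writing $a = g a'$ with $(a', N/g)=1$ regroups the terms as $\sum_{g \mid N} C_{N/g}(j) = \sum_{d \mid N} C_{d}(j)$, while $\sum_{a=1}^{N} e(aj/N)$ equals $N$ exactly when $N \mid j$ and vanishes otherwise by orthogonality of additive characters. Applying this to each factor, the product $\prod_{i=1}^{n}\sum_{d_{i} \mid m_{i}} C_{d_{i}}(j)$ equals $m_{1} \cdots m_{n}$ precisely when $m_{i} \mid j$ for all $i$, that is when $L := [m_{1}, \dots, m_{n}] \mid j$, and is $0$ otherwise.

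Thus the whole expression collapses to
\[
\frac{m_{1} \cdots m_{n}}{m} \sum_{\substack{1 \le j \le m \\ L \mid j}} e\Big(\frac{bj}{m}\Big).
\]
Since $L \mid m$, I would substitute $j = Lt$ with $t$ ranging over $1, \dots, m/L$, so the remaining sum becomes $\sum_{t=1}^{m/L} e\big(bt/(m/L)\big)$, which is $m/L$ if $(m/L) \mid b$ and $0$ otherwise. Multiplying by $\tfrac{m_{1} \cdots m_{n}}{m}$ gives $\tfrac{m_{1}\cdots m_{n}}{L}$ under the condition $\tfrac{m}{L} \mid b$ and $0$ otherwise, which is exactly $J(b; m_{1}, \dots, m_{n})$.

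The only non-mechanical ingredient is the Ramanujan-sum divisor identity; once it is established, the argument reduces to evaluating two standard geometric sums, so I do not expect a genuine obstacle beyond careful bookkeeping of the common modulus $m$ and the index change $j = Lt$.
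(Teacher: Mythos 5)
Your proof is correct, but it takes a genuinely different route from the paper's. The paper substitutes the explicit formula \eqref{explicit formula for RS} into \eqref{Ebm defn}, interchanges sums, and recognizes that
$E(b;m_{1},\dots,m_{n})=\sum_{d_{1}\mid m_{1},\dots,d_{n}\mid m_{n}}J(b;d_{1},\dots,d_{n})\,\mu(m_{1}/d_{1})\cdots\mu(m_{n}/d_{n})$,
i.e.\ that $E$ is the multivariable M\"obius transform of $J$; the lemma then follows by the inversion formula (Lemma \ref{MIF lemma}). You instead sum $E$ over divisors directly, factor the inner divisor sums, and invoke the identity $\sum_{d\mid N}C_{d}(j)=N$ or $0$ according as $N\mid j$ or not --- which is precisely the M\"obius inverse of \eqref{explicit formula for RS}, and which you prove correctly by partitioning the complete exponential sum $\sum_{a=1}^{N}e(aj/N)$ according to $(a,N)$. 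Your bookkeeping is sound: the same modulus $m$ may indeed be used in \eqref{Ebm defn} for every tuple $(d_{1},\dots,d_{n})$ since $[d_{1},\dots,d_{n}]\mid[m_{1},\dots,m_{n}]\mid m$, and the substitution $j=Lt$ with the final orthogonality evaluation gives exactly $J(b;m_{1},\dots,m_{n})$. As for what each approach buys: yours is shorter and self-contained, using only character orthogonality twice and no inversion machinery; the paper's yields the stronger intermediate identity $E=J\ast\mu^{(n)}$, recording the exact M\"obius relationship between $E$ and $J$ rather than only the divisor-sum consequence asserted in the lemma.
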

\begin{proof}
By using \eqref{explicit formula for RS} in \eqref{Ebm defn}, we write
\begin{align*}
E(b; m_{1}, \dots, m_{n}) =& \frac{1}{m} \sum_{j=1}^{m}e\bigg(\frac{bj}{m}\bigg) \sum_{d_{1} \mid (j,m_{1})} d_{1} \mu(m_{1}/ d_{1}) \dots  \sum_{d_{n} \mid (j,m_{n})} d_{n} \mu(m_{n}/ d_{n})\\
=& \frac{1}{m} \sum_{d_{1} \mid m_{1}, \dots, d_{n} \mid m_{n}} d_{1} \mu(m_{1}/ d_{1}) \cdots d_{n} \mu(m_{n}/ d_{n}) \sum_{\substack{1 \leq j \leq m \\ d_{1} \mid j, \dots, d_{n} \mid j}}e\bigg(\frac{bj}{m}\bigg)\\
=& \frac{1}{m} \sum_{d_{1} \mid m_{1}, \dots, d_{n} \mid m_{n}} d_{1} \dots d_{n} \ \mu(m_{1}/ d_{1}) \cdots \mu(m_{n}/ d_{n}) \sum_{\substack{1 \leq j \leq m \\ [d_{1}, \dots, d_{n}] \mid j}}e\bigg(\frac{bj}{m}\bigg)\\
=& \sum_{d_{1} \mid m_{1}, \dots, d_{n} \mid m_{n}} J(b; d_{1}, \dots, d_{n}) \mu(m_{1}/ d_{1}) \cdots  \mu(m_{n}/ d_{n}).
\end{align*}
Now using Lemma \ref{MIF lemma} (M\"{o}bius inversion formula), we obtain
\begin{equation*}
\sum_{d_{1} \mid m_{1}, \dots, d_{n} \mid m_{n}} E(b;d_{1}, \dots, d_{n})= J(b; m_{1}, \dots, m_{n}).
\end{equation*}
This completes the proof of Lemma \ref{MI of E}.
\end{proof}

\section{Solutions of Linear Congruences}\label{soln of lin cong}
In this section, we present the proof of Theorem \ref{soln of congs thm} and Theorem \ref{Theorem 3}. Furthermore, we compare the solution count given in Theorem \ref{soln of congs thm} with the solution count obtained by Butson and Stewart \cite{BS55} through the examination of specific examples. Also, we discuss an example for Theorem \ref{Theorem 3} and verify the solution count by the first principle.

\subsection{Proof of Theorem \ref{soln of congs thm}. }First, we consider the system of  congruences
\begin{align*}
    a_{1j}x_{j} &\equiv y_{j} \pmod{m_{1}},\nonumber\\
    a_{2j}x_{j} &\equiv y_{j} \pmod{m_{2}},\nonumber\\
    &\cdots \cdots \cdots \cdots \nonumber\\
    a_{kj}x_{j} &\equiv y_{j} \pmod{m_{k}}, \ \ \ \text{for} \ 1 \leq j \leq n.
\end{align*}
Since $m_{1}, \dots, m_{k}$ are pairwise coprime integers, it is clear that the system of congruences has a solution if and only if $d_{ij}=(a_{ij}, m_{i}) \mid y_{j}$ for $1 \leq i \leq k$. If this condition is satisfied, then there are $d_{j}=\prod_{i=1}^{k}d_{ij}$ solutions for each $1 \leq j \leq n$. Therefore, the problem reduces to counting the number of solutions of the system of restricted congruences
 \begin{align*}
    y_{1}+y_{2}+\dots+y_{n} &\equiv b_{1} \pmod{m_{1}},\nonumber\\
     y_{1}+y_{2}+\dots+y_{n} &\equiv b_{2} \pmod{m_{2}},\nonumber\\
    \cdots \cdots \cdots \cdots & \cdots \cdots\nonumber\\
     y_{1}+y_{2}+\dots+y_{n} &\equiv b_{k} \pmod{m_{k}}.
\end{align*}
with $(m_{i}, y_{j})=t_{ij}$ $(1 \leq i \leq k$, $1 \leq j \leq n)$, for each $d_{ij} \mid t_{ij} \mid m_{i}, \ (1 \leq i \leq k, 1 \leq j \leq n)$. Given that $m_{1}, \dots, m_{k}$ are pairwise coprime integers, the aforementioned restrictions can be expressed as $(y_{j}, m)=t_{j}$ for $1 \leq j \leq n$, where $d_{j} \mid t_{j} \mid m$ for $1 \leq j \leq n$ and $m=m_{1} \cdots m_{k}$. By applying the one-variable Chinese remainder theorem, we can reformulate the problem as counting the solutions to the restricted linear congruence
$$y_{1}+\dots+y_{n} \equiv b \pmod{m},$$
subject to the conditions $(y_{j}, m)=t_{j}$ for $1 \leq j \leq n$. Here, $b < m$ represents the unique solution to the system of congruences
\begin{align}\label{k-cong to 1cong eq}
x &\equiv b_{1} \pmod{m_{1}},\nonumber\\
x &\equiv b_{2} \pmod{m_{2}},\nonumber\\
&\cdots \cdots \cdots \cdots \nonumber \\
x &\equiv b_{k} \pmod{m_{k}}.
\end{align}
Thus, by using Lemma \ref{without coeff lem}, the number of solutions of \eqref{congs eq} is given by
  \begin{align*}
N_{m}(b; n)&= d_{1} \cdots d_{n} \sum_{d_{1} \mid t_{1} \mid m, \dots, d_{n} \mid t_{n} \mid m} N_{m}(b; t_{1}, \dots, t_{n})\\
&= d_{1} \cdots d_{n} \sum_{d_{1} \mid t_{1} \mid m, \dots, d_{n} \mid t_{n} \mid m} \frac{1}{m} \sum_{j=1}^{m} C_{\frac{m}{t_{1}}}(j)\cdots C_{\frac{m}{t_{n}}}(j) e\bigg(\frac{bj}{m}\bigg)\\
&=d_{1} \cdots d_{n}\sum_{t_{1} \mid \frac{m}{d_{1}}, \dots, t_{n} \mid \frac{m}{d_{n}}} \frac{1}{m} \sum_{j=1}^{m} C_{\frac{m}{t_{1}d_{1}}}(j)\cdots C_{\frac{m}{t_{n}d_{n}}}(j) e\bigg(\frac{bj}{m}\bigg)\\
&=d_{1} \cdots d_{n} \sum_{t_{1} \mid \frac{m}{d_{1}}, \dots, t_{n} \mid \frac{m}{d_{n}}}  E(b; t_{1}, \dots, t_{n}).
\end{align*}
Then, it follows from Lemma \ref{MI of E} that
$$N_{m}(b; n)=d_{1} \cdots d_{n} J\bigg(b; \frac{m}{d_{1}}, \dots, \frac{m}{d_{n}}\bigg)=\begin{cases}
    \frac{m^{n}}{[\frac{m}{d_{1}}, \dots, \frac{m}{d_{n}}]} ,& \text{if }   \frac{m}{[\frac{m}{d_{1}}, \dots, \frac{m}{d_{n}}]} \mid b\\
     0, & \text{otherwise}.
    \end{cases}$$
Recall that $[\frac{m}{d_{1}}, \dots, \frac{m}{d_{n}}]=\bigg[\frac{m_{1} \cdots m_{k}}{d_{11}\cdots d_{k1}}, \dots, \frac{m_{1} \cdots m_{k}}{d_{1n}\cdots d_{kn}}\bigg]$, where $d_{ij}=(a_{ij}, m_{i})$ (for $1 \leq i \leq k$, $1 \leq j \leq n$). Therefore, By using Lemma \ref{lcmgcd lem}, we have
$$N_{m}(b; n)=\begin{cases}
    m^{n-1} \prod_{i=1}^{k} \ell_{i}, & \text{if } \prod_{i=1}^{k} \ell_{i} \mid b\\
     0, & \text{otherwise},
    \end{cases}$$
where $\ell_{i}=(a_{i1}, \dots, a_{in}, m_{i}),$ for $1 \leq i \leq k$. Since $b$ satisfies the system of congruences represented by \eqref{k-cong to 1cong eq}, we can express the above condition as follows.
$$N_{m}(b; n)=\begin{cases}
    m^{n-1} \prod_{i=1}^{k} \ell_{i}, & \text{if }  \ell_{i} \mid b_{i} \ \text{for every } 1 \leq i \leq k\\
     0, & \text{otherwise}.
    \end{cases}$$
This completes the proof of Theorem \ref{soln of congs thm}.

\subsection{Some examples} (i). Consider the system of congruences:
\begin{align*}
2 x_{1}+2x_{2} &\equiv 2 \pmod{12}\\
5x_{1}+7x_{2} &\equiv 1 \pmod{35}.
\end{align*}

Using Theorem \ref{soln of congs thm}, we find that this system has a total of $840$ solutions. To calculate the solution count using Butson and Stewart's method, we first simplify the congruences by multiplying suitable factors:
\begin{align*}
70 x_{1}+70 x_{2} &\equiv 70 \pmod{420}\\
60 x_{1}+84 x_{2} &\equiv 12 \pmod{420}.
\end{align*}

The number of solutions can be determined by computing the invariant factors of the Smith normal form of the matrix:
\begin{equation*}
A =
\begin{pmatrix}
70 & 70 \\
60 & 84
\end{pmatrix}.
\end{equation*}

Calculating the invariant factors as $e_{1}=2$ and $e_{2}=840$, we obtain the solution count as $(2,420)\cdot (840, 420)=840$. Thus, both methods yield the same number of solutions.

\vspace{2mm}
\noindent
(ii). Consider the system of congruences:
\begin{align*}
3 x_{1}+6x_{2}+3x_{3} &\equiv 3 \pmod{9}\\
4x_{1}+2x_{2}+8x_{3} &\equiv 4 \pmod{16}\\
2x_{1}+3x_{2}+x_{3} &\equiv 2 \pmod{5}.
\end{align*}

Using Theorem \ref{soln of congs thm}, we find that this system has a total of $3110400$ solutions. To calculate the solution count using Butson and Stewart's method, we first simplify the congruences by multiplying suitable factors:
\begin{align*}
240 x_{1}+480x_{2}+240x_{3} &\equiv 240 \pmod{720}\\
180x_{1}+90x_{2}+360x_{3} &\equiv 180 \pmod{720}\\
288x_{1}+432x_{2}+144_{3} &\equiv 288 \pmod{720}.
\end{align*}
The number of solutions can be determined by computing the invariant factors of the Smith normal form of the matrix:
\begin{equation*}
A =
\begin{pmatrix}
240 & 480 & 240\\
180 & 90 & 360\\
288 & 432 & 144
\end{pmatrix}.
\end{equation*}
Computationally, we found that the invariant factors of the matrix are $e_{1}=6$, $e_{2}=720$, and $e_{3}=3600$. Therefore, the number of solutions is equal to $(6,720)\cdot (720,720) \cdot (3600,720)=3110400$. Thus, both methods yield the same number of solutions.

\subsection{Proof of Theorem \ref{Theorem 3}. }
Assume that the system of linear congruences in \eqref{congs eq} has
a solution $\langle x_{1}, \dots, x_{n} \rangle \in
\mathbb{Z}_{m}^{n}$ with $(x_{j}, m_{i})=t_{ij}$ for $1 \leq i \leq
k$, $1 \leq i \leq n$. Since $m_1,m_2,\ldots m_k$ are co-prime,  it
follows from Lemma \ref{lemma 6} that the system of congruences

\begin{align}\label{Equation 9}
a_{1j}x_{j} &\equiv y_{j} \pmod{m_{1}},\nonumber\\
a_{2j}x_{j} &\equiv y_{j} \pmod{m_{2}},\nonumber\\
&\cdots \cdots \cdots \cdots \nonumber \\
a_{kj}x_{j} &\equiv y_{j} \pmod{m_{k}}, \quad \text{for} \ 1 \leq j \leq n
\end{align} has a solution if and only if $t_{ij}=(x_{j}, m_{i}) \mid (y_{j},m_i)$ for $1 \leq i \leq k$, $1 \leq j \leq n$.
Then $(a_{ij}x_j,m_i)=(y_j,m_i)=t_{ij}d_{ij}$ for some $d_{ij}$, where $1 \leq i \leq k$, $1 \leq j \leq n$. Therefore,
\begin{align*}
    \left(\frac{a_{ij}x_j}{t_{ij}}, \frac{m_i}{t_{ij}}\right) &= \left(\frac{y_j}{t_{ij}}, \frac{m_i}{t_{ij}}\right) = d_{ij} \quad \text{for } 1 \leq i \leq k,
    1 \leq j \leq n.
\end{align*}
 As $(\frac{x_j}{t_{ij}},\frac{m_i}{t_{ij}})=1$, we obtain
 $d_{ij}=(a_{ij}, \frac{m_i}{t_{ij}})= (\frac{y_j}{t_{ij}}, \frac{m_i}{t_{ij}})$.
 Hence, the problem boils down to counting the number of solutions in the system of congruences
 \begin{align*}
    y_{1}+y_{2}+\dots+y_{n} &\equiv b_{1} \pmod{m_{1}},\nonumber\\
     y_{1}+y_{2}+\dots+y_{n} &\equiv b_{2} \pmod{m_{2}},\nonumber\\
    \cdots \cdots \cdots \cdots & \cdots \cdots\nonumber\\
     y_{1}+y_{2}+\dots+y_{n} &\equiv b_{k} \pmod{m_{k}}
\end{align*}
with $(y_j,m_i)=d_{ij}t_{ij}$. As $m_1,m_2, \ldots, m_k$ are co-prime, the above system
of congruences reduced to
$y_{1}+y_{2}+\dots+y_{n} \equiv b \pmod m$
with $(y_j,m)=t_jd_j$, where $m=m_1m_2\cdots m_k$, $t_j= \prod_{i=1}^{k}t_{ij}$, $d_j= \prod_{i=1}^{k}d_{ij}$ for $1\le j\le n$ and $b$ represents the unique solution to the system of congruences given by \eqref{uniq b}.

Therefore, by Lemma \ref{without coeff lem}, the number of solutions
of the linear
 congruence $y_{1}+y_{2}+\dots+y_{n} \equiv b \pmod m$
with $(y_j,m)=t_jd_j\, (1\le j \le n)$, is
\[
\frac{1}{m} \sum_{d \mid m} C_{d}(b) \prod_{l=1}^{n}C_{\frac{m}{t_{l}d_l}}\bigg(\frac{m}{d}\bigg)
\]
Again, for a given solution $\langle y_1,y_2,\ldots, y_n\rangle\in \mathbb{Z}$ in $y_{1}+y_{2}+\dots+y_{n} \equiv b \pmod m$
with $(y_j,m)=t_jd_j\, (1\le j \le n)$, we need to find out the number of solution in
$a_{ij}x_j\equiv y_j\pmod {m_{i}}$, with $(x_j,m_i)=t_{ij}$ for $1\le i\le k$.
Recall that $d_{ij}=(a_{ij}, \frac{m_i}{t_{ij}})= (\frac{y_j}{t_{ij}}, \frac{m_i}{t_{ij}})$, thus by Lemma \ref{lemma 6}, for fixed $j$, the above congruence has exactly
\[
\prod_{i=1}^{k}\frac{\varphi(\frac{m_i}{t_{ij}})}{\varphi{(\frac{m_i}{t_{ij}d_{ij}})}}
\]
solutions. Hence, for $1\le j\le n$ the system of congruence \eqref{Equation 9}
%$a_{ij}x_j\equiv y_j\pmod {m_{i}}$, with $(x_j,m_i)=t_{ij}$ for $1\le i\le k$
 has exactly
\[
\prod_{i=1}^{k}\prod_{j=1}^{n} \frac{\varphi(\frac{m_i}{t_{ij}})}{\varphi(\frac{m_i}{t_{ij}d_{ij}})}
\]
solutions. Since $m_i$'s $(1\le i\le k)$ are pairwise co-prime, the above product
can be written as
\[
\prod_{j=1}^{n} \frac{\varphi(\frac{m}{t_{j}})}{\varphi(\frac{m}{t_{j}d_{j}})}
\]
Therefore, the number of solutions of the system of congruences \eqref{congs eq}
is exactly
\[
\frac{1}{m}\prod_{j=1}^{n} \frac{\varphi(\frac{m}{t_{j}})}{\varphi(\frac{m}{t_{j}d_{j}})}  \sum_{d \mid m} C_{d}(b) \prod_{l=1}^{n}C_{\frac{m}{t_{l}d_l}}\bigg(\frac{m}{d}\bigg).
\]

\subsection{Example}
We consider the system of congruences
\begin{align}\label{eq0}
&X_1+2X_2\equiv 7 \pmod {15}\nonumber\\ \quad &3X_1+ X_2\equiv 9\pmod {14}.
\end{align}
with the restrictions
$$(X_1,15)=5, (X_1,14)=2, (X_2,15) = 3, (X_2,14)=7.$$
By using the theorem, we show that it has a unique solution for
$X_1, X_2$ modulo $210 = 15 \times 14$. We verify this by first
principles also.

\vspace{1mm}
\noindent
In the notation of the theorem, $m=210$, $t_{11}=5,
t_{12}=3, t_{21}=2, t_{22}=7$, so $t_1=10, t_2 =21$ and $d_{ij}=1$ for
$1\le i,j\le 2$, hence $d_i=1$ for $1\le i\le 2$. It is easy to see that $b=37$ is the unique solution of the congruences $b \equiv 7 \pmod
{15}, b \equiv 9 \pmod{14}$. Then, it follows from Theorem \ref{Theorem 3} that number of solutions of \eqref{eq0} is equal to
\begin{align*}
&\frac{1}{210} \sum_{d \mid 210}  C_{d}(37) \times C_{21}\big(\frac{210}{d}\big) \times C_{10}\big(\frac{210}{d}\big).
\end{align*}

\noindent
Now, we compute the product of the Ramanujan sums for each divisor $d$ of $210$ in the following table:
\begin{center}
\begin{tabular}{ | m{4em} | m{1.5cm}| m{2cm} |m{2cm} | m{2cm}| m{2cm}| m{1.5cm}| }
  \hline
  Sl. no& $d=$ & $C_{d}(37)$ & $C_{21}(\frac{210}{d})$ & $C_{10}(\frac{210}{d})$ & Product \\
  \hline
  1 & $1$ & $1$ & $12$ & $4$& $48$ \\
  \hline
  2 & $2$ & $-1$ & $12$ & $-4$& $48$ \\
  \hline
  3 & $3$ & $-1$ & $-6$ & $4$& $24$\\
  \hline
  4 & $5$ & $-1$ & $12$ & $-1$& $12$\\
  \hline
  5 & $7$ & $-1$ & $-2$ & $4$& $8$\\
  \hline
  6 & $6$ & $1$ & $-6$ & $-4$& $24$\\
  \hline
  7 & $10$ & $1$ & $12$ & $1$& $12$ \\
  \hline
  8 & $14$ & $1$ & $-2$ & $-4$& $8$ \\
  \hline
  9 & $15$ & $1$ & $-6$ & $-1$& $6$\\
  \hline
  10 & $21$ & $1$ & $1$ & $4$& $4$\\
  \hline
  11 & $35$ & $1$ & $-2$ & $-1$& $2$\\
  \hline
  12 & $30$ & $-1$ & $-6$ & $1$& $6$\\
  \hline
  13& $42$ & $-1$ & $1$ & $-4$& $4$\\
  \hline
  14& $70$ & $-1$ & $-2$ & $1$& $2$\\
  \hline
  15& $105$ & $-1$ & $1$ & $-1$& $1$\\
  \hline
  16& $210$ & $1$ & $1$ & $1$& $1$\\
  \hline
\end{tabular}
\end{center}
Hence, the number of solutions is equal to
$$\frac{1}{210}(48+48+24+12+8+24+12+8+6+4+2+6+4+2+1+1)=1.$$

\noindent \textbf{Verification by first principles: }First, note
that $(10,21)$ is a solution; call this $(x,y)$. If $(u,v)$ is any
solution of the system of congruences - without assuming the GCD
conditions, then $(x-u,y-v)$ is a solution $(s,t)$ of the system
\begin{align*}
&X_1+2X_2\equiv 0 \pmod {15}\\ \quad &3X_1+ X_2\equiv 0\pmod {14}.
\end{align*}
From $s+2t \equiv 0 \pmod{15}$ and $3s+t \equiv 0 \pmod{14}$, we
obtain
$$5s = -15k+ 28l, \ 5t = 45k - 14l,$$
for some integers $k,l$. Thus, $l =5m$ for some integer $m$, which gives us
$$s = -3k + 28m, t = 9k- 14m.$$
Thus, any solution $(u,v)$ of the system of congruences \eqref{eq0} is of the form $u=10+3k-28m, v = 21 -9k+14m$.

\noindent
If this solution satisfies the GCD restrictions
$$(u,15)=5, (u,14)=2, (v,15)=3, (v,14)=7,$$
then $10|u, 21|v$. In particular, $7|k$; say, $k=7K$. So,
$$u = 10+21K-28m, v = 21-63K+14m.$$
From $21|v$, we get $3|m$; write $m=3M$. Then,
$$u=10+21K-84M$$
which gives, by the condition $10|u$, that $K-4M = 10N$ for some
$N$. Hence,
$$u = 10+21(K-4M)= 10+210N \equiv 10 \pmod{210};$$
$$v =21-63K+42M= 21-63(10N+4M)+42M = 21 -630N -210M \equiv
21 \pmod{210}.$$

\section{Restricted linear congruences over $\mathbb{F}_q[t]$}

We show that the above results for integers can be generalized in an
appropriate way to the case of $\mathbb{F}_q[t]$ where
$\mathbb{F}_q$ is a finite field of cardinality $q$. 

Let $k$ and $n$ be arbitrary positive integers and $A_{ij} \in \mathbb{F}_q[t]$
for $1 \leq i \leq k$, $1 \leq j \leq n$. For $H_{1}, \dots, H_{k}
\in \mathbb{F}_q[t]$ of positive degrees and $B_{1}, \dots, B_{k}
\in \mathbb{F}_q[t]$ be arbitrary. Then, consider the system of congruences over $\mathbb{F}_q[t]$:
\begin{align}\label{congs eq ff}
    A_{11}X_{1}+A_{12}X_{2}+\dots+A_{1n}X_{n} &\equiv B_{1} \pmod{H_{1}},\nonumber\\
    A_{21}X_{1}+A_{22}X_{2}+\dots+A_{2n}X_{n} &\equiv B_{2} \pmod{H_{2}},\nonumber\\
    \cdots \cdots \cdots \cdots & \cdots \cdots\nonumber\\
    A_{k1}X_{1}+A_{k2}X_{2}+\dots+A_{kn}X_{n} &\equiv B_{k} \pmod{H_{k}}.
\end{align}
We consider the existence of solutions $X_i \in \mathbb{F}_q[t]$,
and count the number of solutions modulo the
ideal generated by $H=H_{1}\cdots H_{k}$. For every $H \in \mathbb{F}_{q}[t]$, the function $|H|=q^{\deg{H}}=|\mathbb{F}_q[t]/(H)|$
denotes the absolute value function on $\mathbb{F}_{q}[t]$.
We shall use the notations
$(H_{1}, \dots, H_{k})$ and $[H_{1}, \dots, H_{k}]$  to denote the
gcd and lcm of the polynomials $H_{1}, \dots, H_{k}$ respectively.
Given non-zero polynomials $H_{1}, \ldots, H_{k}$, both the $(H_{1}, \dots, H_{k})$ and $[H_{1}, \dots, H_{k}]$ are well-defined upto units. Thus, we say that any two polynomials $H_{1}\,\textit{and}\, H_{2}$ are coprime, if $(H_{1}, H_{2})$ is a unit; for convenience, we write $(H_{1}, H_{2})=1$. We prove the following theorem.

\begin{theorem}\label{soln of congs thm ff}
Let $H_{1}, \dots, H_{k} \in \mathbb{F}_q[t]$ be pairwise co-prime
non-constant polynomials, and $H=H_{1}\cdots H_{k}$. The system of
congruences represented by \eqref{congs eq ff} has a solution
$(X_{1}, \dots, X_{n}) \in \mathbb{F}_{q}[t]^{n}$ if, and only if,
for each $i \leq k$, $L_{i} \mid B_{i}$, where $L_{i}= (A_{i1},
\dots, A_{in}, H_{i})$. Further, if this condition is satisfied,
then there are $|H|^{n-1} \prod_{i=1}^{k} |L_{i}|$ solutions.
\end{theorem}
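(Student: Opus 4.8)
The strategy is to mirror the proof of Theorem \ref{soln of congs thm} verbatim, transferring each integer-theoretic ingredient to its $\mathbb{F}_q[t]$ analogue. Since $\mathbb{F}_q[t]$ is a Euclidean domain (hence a PID with a well-behaved $\gcd$/$\operatorname{lcm}$ theory, unique up to units), virtually every lemma in Section 3 carries over with $m$ replaced by $H$, $m_i$ by $H_i$, and absolute values $|H|=q^{\deg H}$ playing the role of $m$ in counting arguments. First I would decompose the system \eqref{congs eq ff} exactly as in the integer case: for each fixed $j$, introduce auxiliary variables $Y_j$ and consider the single-variable congruences $A_{ij}X_j \equiv Y_j \pmod{H_i}$. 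Because the $H_i$ are pairwise coprime, this subsystem is solvable if and only if $D_{ij}:=(A_{ij},H_i)\mid Y_j$ for each $i$, and when solvable it contributes $\prod_{i=1}^k |D_{ij}|$ solutions (the count of residues mod $H_i$ divisible by $D_{ij}$ is $|H_i|/|D_{ij}| \cdot$ the fiber size, reducing to $|D_{ij}|$ after the standard count). This reduces the problem to the all-ones system $Y_1+\cdots+Y_n \equiv B \pmod{H}$ with divisibility restrictions, where $B$ is the CRT-combination of the $B_i$.

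\emph{Function-field inputs needed.} The second ingredient is the function-field Ramanujan sum and the associated $E$ and $J$ functions. The plan is to invoke Zheng's \cite{ZZ18} description of Ramanujan sums over $\mathbb{F}_q[t]$ (cited as forthcoming in the last section) to define $C_H(A)$, verify the explicit Möbius formula analogous to \eqref{explicit formula for RS}, and re-establish Lemma \ref{MI of E} in this setting — the Möbius inversion (Lemma \ref{MIF lemma}) holds over any UFD, so the arithmetic-function formalism of Section 2 transfers once one checks that the inner character sum $\sum_{1\le j\le H,\; [D_1,\dots,D_n]\mid j} e(Bj/H)$ has the same orthogonality behaviour; here the additive character on $\mathbb{F}_q[t]/(H)$ replaces $e(\cdot)$, and the key fact is that this sum is $|H|/|[D_1,\dots,D_n]|$ when $H/[D_1,\dots,D_n]\mid B$ and $0$ otherwise. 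I would then record the function-field versions of Lemma \ref{1var lcmgcd lem} and Lemma \ref{lcmgcd lem}; since prime factorization in $\mathbb{F}_q[t]$ is just as in $\mathbb{Z}$ (prime-by-prime valuation arguments with $\deg$ in place of $\operatorname{ord}_p$), the identity
\[
\bigg[\frac{H}{D_1},\dots,\frac{H}{D_n}\bigg]=\frac{H}{(A_{1},\dots,A_{n},H)}
\]
and its $k$-fold coprime refinement go through unchanged.

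\emph{Assembling the count.} With these analogues in hand, the final computation is formal: using the function-field Lemma \ref{without coeff lem} (the $E$-function expansion) and then Lemma \ref{MI of E}, the solution count becomes
\[
N_H = |D_1|\cdots|D_n|\, J\!\bigg(B;\frac{H}{D_1},\dots,\frac{H}{D_n}\bigg),
\]
which is $\dfrac{|H|^n}{\big|\,[\,H/D_1,\dots,H/D_n\,]\,\big|}$ precisely when $\dfrac{H}{[\,H/D_1,\dots,H/D_n\,]}\mid B$ and $0$ otherwise. Substituting the lcm identity turns this into $|H|^{n-1}\prod_{i=1}^k |L_i|$ with $L_i=(A_{i1},\dots,A_{in},H_i)$, and the divisibility condition $\frac{H}{[\cdots]}\mid B$ translates, via the CRT characterization of $B$, into $L_i\mid B_i$ for every $i$, giving both the existence criterion and the count.

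\emph{Main obstacle.} The genuine work is not the algebra of gcd/lcm — that is a routine valuation transcription — but rather establishing that Zheng's Ramanujan sums and the additive-character orthogonality over $\mathbb{F}_q[t]$ behave exactly as their integer counterparts, so that Lemmas \ref{without coeff lem} and \ref{MI of E} remain valid. In particular one must fix a suitable additive character of $\mathbb{F}_q[t]/(H)$ (or of the completion at infinity, as in Zheng's setup) and confirm the orthogonality relation $\frac{1}{|H|}\sum_{j} e(Bj/H)=\mathbf{1}[\,H\mid B\,]$ together with the divisor-summed Möbius identity; once these are in place, every remaining step is a direct replica of the integer proof. Because the paper defers the precise definitions to the final section, I expect the cleanest exposition is to state the three transferred lemmas as function-field analogues, cite \cite{ZZ18} for the Ramanujan-sum machinery, and then remark that the proof of Theorem \ref{soln of congs thm} applies \emph{mutatis mutandis}.
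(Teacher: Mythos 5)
Your proposal follows essentially the same route as the paper: the authors likewise prove Theorem \ref{soln of congs thm ff} by repeating the argument of Theorem \ref{soln of congs thm} verbatim, substituting Zheng's polynomial Ramanujan sums, the additive characters $E(G,H)$ with their orthogonality relation (Lemma \ref{ortho of E lem}), and the function-field M\"obius-inversion identity Lemma \ref{MI of E ff} for their integer counterparts. Your identification of the one genuinely non-formal input --- verifying that the character orthogonality and the divisor-sum formula for $\eta(G,H)$ behave exactly as over $\mathbb{Z}$ --- matches what the paper sets up in Section \ref{basic def ff} before declaring the proof \emph{mutatis mutandis}.
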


\noindent Moreover, we may define the analogues of Euler's $\varphi$
function, the M\"{o}bius function and Ramanujan sums for
$\mathbb{F}_q[t]$ and prove results analogous to the ones proved in
the earlier sections. In what follows, we will discuss this in some detail though the proofs themselves are completely analogous to
the above ones.

\vspace{1mm}\noindent
For a non-constant polynomial $H$, define $\varphi(H)$
to be the number of polynomials $Q \in \mathbb{F}_q[t]$ of degree
less than $\deg(H)$ which are coprime to $H$. Then, we shall prove:

\begin{theorem}\label{sing lin thm ff}
Let $H$ be a non-constant polynomial as before, and let $A, B \in
\mathbb{F}_q[t]$ be arbitrary non-zero polynomials. Then, the
congruence $AX \equiv B \pmod {H}$ has a solution $X \in
\mathbb{F}_q[t]$ satisfying $(X, H ) = T$ if and only if $T | (B,
H)$ and $\left(A, \frac{H}{T}\right) = \left(\frac{B}{T},
\frac{H}{T}\right)$, where $T\neq 0$. Furthermore, if these
conditions are satisfied, then there are exactly
\[
\frac{\varphi(\frac{H}{T})}{\varphi(\frac{H}{DT})}
\]
solutions modulo $H$, where $D=(A, \frac{H}{T})
=(\frac{B}{T},\frac{H}{T})$.
\end{theorem}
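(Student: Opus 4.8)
The plan is to mirror the proof of Lemma \ref{lemma 6} (the integer single-congruence case), checking that each step survives the passage to $\mathbb{F}_q[t]$. The only structural caveat is the one flagged before the statement: gcd's and lcm's in $\mathbb{F}_q[t]$ are defined only up to units, so every divisibility and gcd-equality below is read up to nonzero constants. Since $\mathbb{F}_q[t]$ is Euclidean, hence a PID, divisibility, the Chinese remainder theorem, and the totient $\varphi$ all behave as over $\mathbb{Z}$; in particular $\varphi$ is multiplicative on coprime moduli and $\varphi(H)=|H|\prod_{P\mid H}(1-|P|^{-1})$, the product running over the monic irreducible divisors $P$ of $H$. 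These facts are exactly what the integer proof uses, so once they are recorded the argument transfers verbatim.

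First I would reduce to the coprime case. A solution $X$ with $(X,H)=T$ can be written $X=TX'$ with $(X',H/T)=1$, and $T\mid H$ is automatic. Substituting into $AX\equiv B\pmod H$ shows that a solution forces $T\mid B$, hence $T\mid (B,H)$, which lets me divide the congruence through by $T$ and obtain the equivalent problem $A X'\equiv B/T\pmod{H/T}$ subject to $(X',H/T)=1$. Next I would extract the compatibility condition: because $(X',H/T)=1$ we have $(AX',H/T)=(A,H/T)$, while $AX'\equiv B/T\pmod{H/T}$ means $AX'$ and $B/T$ differ by a multiple of $H/T$, so $(AX',H/T)=(B/T,H/T)$. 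Comparing gives the asserted necessary condition $(A,H/T)=(B/T,H/T)=:D$.

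For sufficiency and the count I would factor $A=DA_1$, $B/T=DB_1$, $H/T=DH_1$ with $(A_1,H_1)=(B_1,H_1)=1$, so the congruence becomes $A_1X'\equiv B_1\pmod{H_1}$. Here $A_1$ is invertible modulo $H_1$, yielding a unique solution $X'_0\equiv A_1^{-1}B_1\pmod{H_1}$, which is automatically coprime to $H_1$. The solutions $X'$ modulo $H/T=DH_1$ are then precisely the lifts of $X'_0$ from modulus $H_1$ to modulus $H/T$, and I must retain only those coprime to $H/T$. The single genuinely new ingredient, and the step I expect to be the main obstacle, is the counting lemma: the number of residues modulo $N:=H/T$ reducing to a fixed residue coprime to $N/D=H_1$ and themselves coprime to $N$ equals $\varphi(N)/\varphi(N/D)$. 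I would prove this by the $\mathbb{F}_q[t]$-version of the Chinese remainder theorem together with the totient formula above, paralleling the integer identity invoked in Lemma \ref{lemma 6}. This produces the stated count $\varphi(H/T)/\varphi(H/(DT))$ and completes the proof.
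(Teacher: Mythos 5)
Your proposal is correct and takes essentially the same route the paper intends: the paper offers no independent argument for Theorem \ref{sing lin thm ff}, stating only that the proof of Theorem 3.1 of \cite{BBVRL17} transfers to $\mathbb{F}_q[t]$, and your reduction $X=TX'$, the extraction of $D=(A,H/T)=(B/T,H/T)$, and the lift count are exactly that transfer. The counting step you single out is the standard fact that the reduction map $(\mathbb{F}_q[t]/(H/T))^{*}\to(\mathbb{F}_q[t]/(H/(DT)))^{*}$ is surjective with fibers of equal cardinality $\varphi(H/T)/\varphi(H/(DT))$, which holds because $\mathbb{F}_q[t]$ is a PID, so no gap remains.
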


\noindent Finally, a notion of Ramanujan sums for $\mathbb{F}_q[t]$
was defined by Z. Zheng \cite{ZZ18}. For $A,B \in \mathbb{F}_q[t]$, the
Ramanujan sum $\eta(A,B)$ is defined in Section \ref{basic def ff} (see \eqref{rama sum ff}) and we
prove:

\begin{theorem}\label{without coeff thm ff}
Let $H$ be a non-constant polynomial, and let $H_i|H\,(1\le i\le
k)$. Then, for any $B \in \mathbb{F}_{q}[t]$, the number of
solutions modulo $H$ of the linear congruence
$$X_1+\cdots+X_k\equiv B \pmod H, \ \text{with} \ (X_i,H)=H_i\, (1\le i\le k),$$
is given by

\[
\dfrac{1}{|H|}\sum_{D|H}\eta(B,D)\prod_{i=1}^{k}\eta(H/D,H/H_i).
 \]
\end{theorem}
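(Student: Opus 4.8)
The plan is to mirror the proof of Theorem \ref{Theorem 3} (the integer case), exploiting the fact that the function-field analogue $\eta(B,D)$ of the Ramanujan sum is defined precisely so that all the structural identities used over $\mathbb{Z}$ carry over verbatim. First I would record the $\mathbb{F}_q[t]$-analogues of the tools already in place: the explicit formula $\eta(A,B)=\sum_{D\mid (A,B)}\mu(B/D)\,|D|$ (with $\mu$ the polynomial M\"obius function), the evenness property $\eta(A,B)=\eta((A,B),B)$, and the statement that $\eta(\cdot,H)$ is the discrete Fourier transform of the indicator function $\varrho_{H,T}$ of the set $\{K:(K,H)=T\}$, where the DFT is now taken against the additive character $e_q(\mathrm{Res}(\cdot))$ on $\mathbb{F}_q[t]/(H)$ supplied by Zheng's framework. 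These are the exact analogues of equation \eqref{explicit formula for RS} and of Lemma \ref{without coeff lem}; since the paper has already declared the proofs to be ``completely analogous,'' I would invoke them as the function-field versions of those results.

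Granting these, the core of the argument is a Cauchy-convolution computation identical in form to the one establishing Lemma \ref{without coeff lem}. Writing $N$ for the number of solutions of $X_1+\cdots+X_k\equiv B\pmod H$ with $(X_i,H)=H_i$, I would express $N$ as a character sum over $\mathbb{F}_q[t]/(H)$:
\begin{equation*}
N=\frac{1}{|H|}\sum_{J \bmod H}\Big(\prod_{i=1}^{k}\widehat{\varrho_{H,H_i}}(J)\Big)\,e_q\!\left(\mathrm{Res}\frac{BJ}{H}\right),
\end{equation*}
which uses orthogonality of additive characters to detect the congruence together with the factorization $\widehat{f\otimes g}=\hat f\,\hat g$ for the Cauchy convolution. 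Substituting $\widehat{\varrho_{H,H_i}}(J)=\eta(H/H_i,\,J)$ turns the inner product into $\prod_{i=1}^{k}\eta(H/H_i,J)$. The final step is to collapse the sum over residues $J$ into a sum over divisors $D\mid H$ by grouping the $J$ according to the value $D=H/(J,H)$ and using evenness of $\eta$ in the second variable; this is exactly the manipulation $\sum_{J}\mapsto\sum_{D\mid H}$ that converts the first expression in Lemma \ref{without coeff lem} into its second, and it yields
\begin{equation*}
N=\frac{1}{|H|}\sum_{D\mid H}\eta(B,D)\prod_{i=1}^{k}\eta(H/D,\,H/H_i),
\end{equation*}
which is the claimed formula.

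The step I expect to require the most care is verifying that Zheng's additive-character machinery genuinely makes $\eta(H/H_i,\cdot)$ the DFT of $\varrho_{H,H_i}$ and that orthogonality holds in the form needed; unlike over $\mathbb{Z}$, the character is built from the residue pairing on $\mathbb{F}_q[t]$, so one must confirm that $\sum_{J\bmod H}e_q(\mathrm{Res}(AJ/H))$ vanishes unless $H\mid A$ and equals $|H|$ otherwise, and that the ``even function = sum over divisors of its DFT against Ramanujan sums'' identity survives. Everything else—the coprimality reductions via the function-field Chinese remainder theorem and the bookkeeping of the divisor sum—is routine once these foundational facts are in hand. I would therefore state the three analogue lemmas explicitly (explicit formula, DFT identity, orthogonality), note that each is proved exactly as its integer counterpart in Section 2, and then present the convolution computation above as the proof proper.
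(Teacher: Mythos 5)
Your proposal is correct and follows essentially the same route the paper takes: the paper's proof of Theorem \ref{without coeff thm ff} simply defers to the argument of Theorem 3.4 in \cite{BBVRL17}, i.e.\ writing the solution count as the Cauchy convolution $(\rho_{H,H_1}\otimes\cdots\otimes\rho_{H,H_k})(B)$, applying $\widehat{\Delta\otimes\delta}=\widehat{\Delta}\,\widehat{\delta}$ together with $\widehat{\rho_{H,H_i}}=\eta(\cdot,H/H_i)$, and inverting. The foundational facts you flag as needing care are exactly the ones the paper has already supplied via Zheng's characters $E(G,H)$ (orthogonality in Lemma \ref{ortho of E lem}, the DFT identity for $\rho_{H,H_1}$, and the explicit formula for $\eta$), so no gap remains.
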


As an application of Theorem \ref{sing lin thm ff} and Theorem \ref{without coeff thm ff}, we prove the following Theorem:
\begin{theorem}\label{res lin cong ff}
Let $H_{1}, \dots, H_{k}$ be pairwise coprime polynomials in $\mathbb{F}_q[t]$ and
$H=H_{1}\cdots H_{k}$. Let $T_{ij} \mid H_{i}$ (for $1 \leq i \leq
k$, $1 \leq j \leq n$). Then the number of solutions of the system
of congruences represented by \eqref{congs eq ff} with the
restrictions $(X_{j}, H_{i})=T_{ij}$ for $1 \leq i \leq k$, $1 \leq
i \leq n$, is given by
\[
\frac{1}{|H|}\prod_{j=1}^{n} \frac{\varphi(\frac{H}{T_{j}})}{\varphi(\frac{H}{T_{j}D_{j}})}
\sum_{D \mid H} \eta{(B,D)} \prod_{l=1}^{n}\eta\bigg(\frac{H}{D}, \frac{H}{T_{l}D_i}\bigg),
\]
where $T_j= \prod_{i=1}^{k}T_{ij}$
and $D_j= \prod_{i=1}^{k}D_{ij}$ with $D_{ij}=(A_{ij}, \frac{H_i}{T_{ij}})$
for $1\le i\le k$ and $1\le j\le n$. Here $B$ represents the unique
solution of the following system of congruences
\begin{align*}
X &\equiv B_{i} \pmod{H_{i}}, \ 1 \leq i \leq k.
\end{align*}
\end{theorem}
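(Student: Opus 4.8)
The plan is to transcribe the proof of Theorem \ref{Theorem 3} essentially verbatim, systematically replacing each integer ingredient by its $\mathbb{F}_q[t]$-counterpart: Lemma \ref{lemma 6} by Theorem \ref{sing lin thm ff}, Lemma \ref{without coeff lem} by Theorem \ref{without coeff thm ff}, the Chinese remainder theorem over $\mathbb{Z}$ by the CRT over the Euclidean domain $\mathbb{F}_q[t]$, and the ordinary Euler and Ramanujan functions by their analogues $\varphi$ and $\eta$ on $\mathbb{F}_q[t]$. As in the integer case, the argument splits the solution count into two independent layers: counting admissible tuples of auxiliary variables, and counting preimages of each such tuple.

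For the first layer I would introduce, for each $j$, a polynomial $Y_j$ (taken modulo $H$) determined by $Y_j \equiv A_{ij}X_j \pmod{H_i}$ for $1\le i\le k$, which is well defined by CRT since the $H_i$ are pairwise coprime. Under this substitution the system \eqref{congs eq ff} becomes $\sum_j Y_j \equiv B_i \pmod{H_i}$, which CRT collapses to the single congruence $\sum_j Y_j \equiv B \pmod H$ with $B$ the unique common solution. Next I would translate the restriction $(X_j, H_i)=T_{ij}$ into a restriction on $Y_j$: by Theorem \ref{sing lin thm ff}, the congruence $A_{ij}X_j \equiv Y_j \pmod{H_i}$ is solvable with $(X_j,H_i)=T_{ij}$ exactly when $T_{ij}\mid (Y_j,H_i)$ and $(A_{ij},H_i/T_{ij})=(Y_j/T_{ij},H_i/T_{ij})$, that is, precisely when $(Y_j,H_i)=T_{ij}D_{ij}$ with $D_{ij}=(A_{ij},H_i/T_{ij})$. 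Using multiplicativity of the gcd across the coprime moduli $H_i$ (the polynomial analogue of Lemma \ref{coprime lcmgcd lem}), these $k$ conditions combine into the single condition $(Y_j,H)=T_jD_j$, where $T_j=\prod_i T_{ij}$ and $D_j=\prod_i D_{ij}$.

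The admissible tuples $(Y_1,\dots,Y_n)$ are then exactly the solutions of $\sum_j Y_j \equiv B \pmod H$ with $(Y_j,H)=T_jD_j$, which Theorem \ref{without coeff thm ff} counts as
\[
\frac{1}{|H|}\sum_{D\mid H}\eta(B,D)\prod_{l=1}^{n}\eta\!\left(\tfrac{H}{D},\tfrac{H}{T_lD_l}\right).
\]
For the second layer, I would fix an admissible $(Y_1,\dots,Y_n)$ and count its preimages: for each $i,j$, Theorem \ref{sing lin thm ff} yields exactly $\varphi(H_i/T_{ij})/\varphi(H_i/(T_{ij}D_{ij}))$ values of $X_j$ modulo $H_i$ solving $A_{ij}X_j\equiv Y_j\pmod{H_i}$ with $(X_j,H_i)=T_{ij}$. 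Multiplying over $i$ and $j$ and invoking multiplicativity of $\varphi$ on coprime polynomials, the number of preimages of a fixed admissible tuple is $\prod_{j}\varphi(H/T_j)/\varphi(H/(T_jD_j))$. Since this preimage count is constant across all admissible $(Y_1,\dots,Y_n)$, the total number of solutions factors as the product of the two layers, giving the asserted formula.

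The genuinely substantive inputs are the two function-field theorems \ref{sing lin thm ff} and \ref{without coeff thm ff}, which rest on the discrete Fourier transform and Ramanujan-sum formalism over $\mathbb{F}_q[t]$ developed from Zheng \cite{ZZ18}; granting those, the rest is formal. The one point I expect to require care is the verification that the preimage count truly depends only on the gcd data $T_{ij}, D_{ij}, H_i$ and not on the particular $(Y_1,\dots,Y_n)$, since it is this independence that legitimizes factoring the total count as a product of the two layers — and this is exactly what Theorem \ref{sing lin thm ff} delivers, as its count is expressed solely through those invariants. The remaining elementary facts (multiplicativity of $\varphi$ and of the gcd over coprime polynomials) transfer without difficulty, as $\mathbb{F}_q[t]$ is a Euclidean domain with unique factorization.
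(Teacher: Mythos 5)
Your proposal is correct and matches the paper's intent exactly: the authors prove this theorem by stating that it ``follows similarly to the proof of Theorem \ref{Theorem 3},'' i.e.\ by the same two-layer argument you describe, with Lemma \ref{lemma 6} and Lemma \ref{without coeff lem} replaced by Theorem \ref{sing lin thm ff} and Theorem \ref{without coeff thm ff}, and the integer CRT, $\varphi$, and Ramanujan sums replaced by their $\mathbb{F}_q[t]$ analogues. Your transcription of that argument, including the reduction to $(Y_j,H)=T_jD_j$ and the constant preimage count $\prod_j \varphi(H/T_j)/\varphi(H/(T_jD_j))$, is precisely the proof the paper has in mind.
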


\section{Ramanujan Sums and DFT}\label{basic def ff}
In \cite{ZZ18}, Z. Zheng introduced the concept of the Ramanujan sum
within the context of $\mathbb{F}_q[t]/\langle H\rangle$, where
$H\in \mathbb{F}_q[t]$ represents a fixed non-constant polynomial in
$\mathbb{F}_q[t]$. Let $A$ be any polynomial such that $A\equiv
a_{m-1}t^{m-1}+\cdots + a_1x+a_0\pmod H$, and $\tau$ be a function
from $\mathbb{F}_q[t]/\langle H\rangle$ to $\mathbb{F}_q$ defined by
$\tau(A)=a_{m-1}$. Then, $\tau$ is an additive function modulo $H$ on
$\mathbb{F}_q[t]$, that is, for any $A$ and $B$ in $\mathbb{F}_q[t]$,
we have $\tau(A+B)=\tau(A)+\tau(B)$, and if $A\equiv B\pmod H$
then $\tau(A)=\tau(B)$. In particular $\tau(A)=0$ whenever $H|A$. In
general $\tau_{G}(A)=\tau(GA)$, then $\tau_G(A)$ is also an additive
function modulo $H$.

\vspace{2mm} Next, let $\lambda$ be a fixed non-principal character
on $\mathbb{F}_{q}$; for example, one may choose $\lambda(a) =
e(\frac{tr(a)}{p})$ for $a\in \mathbb{F}_{q}$, where $tr$ is the
trace map from $\mathbb{F}_q$ to $\mathbb{F}_p$, and $e(x)= e^{2\pi
ix}$. We define a complex valued function $E(G,H)$ on
$\mathbb{F}_q[t]$ by
$$E(G,H)(A)=\lambda(\tau_G(A)).$$ It is easy to see that $E(G,H)$ is an additive character modulo $H$ on $\mathbb{F}_q[t]$ and
\begin{equation*}
E(G,H)(A)=E(A,H)(G).
\end{equation*}

\noindent Moreover, the following lemma shows that any additive
character modulo $H$ on $\mathbb{F}_q[t]$ is of the form $E(G, H)$
for some $G \in \mathbb{F}_q[t]$.
\begin{lemma}(Lemma 2.1, \cite{ZZ18})
For any $\psi$, an additive character modulo $H$ on
$\mathbb{F}_q[t]$, there exists a unique polynomial $G$ in
$\mathbb{F}_q[t]$ such that $\psi=E(G, H )$, and $\deg{G} <
\deg{H}$.
\end{lemma}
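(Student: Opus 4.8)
The plan is to recognize the additive characters modulo $H$ as the characters of the finite abelian group $R := \mathbb{F}_q[t]/\langle H\rangle$ and to show that $G \mapsto E(G,H)$ realizes the self-duality of $R$ furnished by the pairing $\langle G, A\rangle = \tau(GA)$. Write $m = \deg H$, so that $R$ is an $\mathbb{F}_q$-vector space of dimension $m$ with basis $1, t, \dots, t^{m-1}$ and $|R| = q^m = |H|$; the polynomials $G$ with $\deg G < m$ are precisely the distinguished representatives of the residue classes of $R$. Since an additive character modulo $H$ factors uniquely through $R$, it suffices to parametrize the characters of $R$. First I would record that $\Phi : G \mapsto E(G,H)$ is a homomorphism from $(R,+)$ into the character group $\widehat R$: additivity of $\tau$ gives $E(G_1+G_2,H)(A) = \lambda\big(\tau(G_1 A) + \tau(G_2 A)\big) = E(G_1,H)(A)\, E(G_2,H)(A)$, and the symmetry $E(G,H)(A) = E(A,H)(G)$ together with the $H$-periodicity of each $E(\cdot,H)$ shows $E(G,H)$ depends only on $G \bmod H$, so that $\Phi$ is well defined on $R$.

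Since $R$ is a finite abelian group, its character group satisfies $|\widehat R| = |R| = q^m$. Hence an injective homomorphism $\Phi : R \to \widehat R$ is automatically bijective, and bijectivity is exactly the content of the lemma: surjectivity yields the existence of $G$ with $\psi = E(G,H)$, while injectivity yields its uniqueness among the degree-$<m$ representatives. The entire proof therefore reduces to showing $\ker \Phi = 0$.

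To compute the kernel, note that $G \in \ker \Phi$ means $\lambda(\tau(GA)) = 1$ for every $A$. The map $A \mapsto \tau(GA)$ is $\mathbb{F}_q$-linear, so its image is an $\mathbb{F}_q$-subspace of $\mathbb{F}_q$; were this image nonzero it would be all of $\mathbb{F}_q$, and since $\lambda$ is non-principal there is some $u$ with $\lambda(u)\neq 1$ attained as a value $\tau(GA)$, contradicting $G \in \ker\Phi$. Thus $\ker\Phi$ coincides with the radical of the symmetric $\mathbb{F}_q$-bilinear form $\langle G, A\rangle = \tau(GA)$ on $R$, and it remains to prove this form is nondegenerate. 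This is the crux. The neat point I would exploit is that in the basis $1, t, \dots, t^{m-1}$ the Gram matrix is $\big(\tau(t^{i+j})\big)_{0 \le i,j \le m-1}$, whose entries vanish when $i+j < m-1$ and equal $1$ when $i+j = m-1$; it is therefore anti-triangular with unit anti-diagonal. Since any contributing permutation $\sigma$ must satisfy $i+\sigma(i)\ge m-1$ for all $i$, and summing these inequalities forces equality throughout, the only surviving term is $\sigma(i) = m-1-i$, giving $\det = \pm 1 \neq 0$ independently of $H$. Nondegeneracy follows, so $\ker\Phi = 0$ and the proof is complete. The only step demanding genuine care is this nondegeneracy claim; everything else is formal, and it is exactly the degree-bookkeeping of $\tau$ that keeps the Gram-matrix computation clean.
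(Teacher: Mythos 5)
Your proof is correct, but note that the paper itself offers no proof of this statement: it is quoted verbatim as Lemma 2.1 of Zheng \cite{ZZ18}, so there is no in-paper argument to compare against. Your skeleton --- view additive characters modulo $H$ as characters of the finite abelian group $R=\mathbb{F}_q[t]/\langle H\rangle$, check that $\Phi: G\mapsto E(G,H)$ is a well-defined homomorphism $R\to\widehat{R}$, and reduce everything by the counting identity $|\widehat{R}|=|R|=q^{\deg H}$ to showing $\ker\Phi=0$ --- is exactly the standard self-duality argument, and each step checks out: $\tau$ is $\mathbb{F}_q$-linear, so the image of $A\mapsto\tau(GA)$ is a subspace of the one-dimensional space $\mathbb{F}_q$, and non-principality of $\lambda$ correctly rules out the full image, identifying $\ker\Phi$ with the radical of the pairing $\langle G,A\rangle=\tau(GA)$. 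Your anti-triangular Gram-matrix computation is also valid (the permutation-sum argument forcing $\sigma(i)=m-1-i$ is airtight), but it is heavier than needed: nondegeneracy admits a one-line direct proof. If $G\not\equiv 0\pmod H$ has reduced representative of degree $d<m=\deg H$ with leading coefficient $g_d\neq 0$, take $A=t^{m-1-d}$; then $GA$ has degree $m-1$, is already reduced modulo $H$, and $\tau(GA)=g_d\neq 0$. This avoids the determinant expansion entirely and is essentially how the cited source argues distinctness of the characters $E(G,H)$; what your Gram-matrix route buys in exchange is a slightly more structural statement (the pairing is unimodular in the monomial basis, with determinant $\pm 1$ independent of $H$), which is more than the lemma requires.
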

The next lemma gives the orthogonal relation for the additive characters
modulo $H$.
\begin{lemma}\label{ortho of E lem}(Lemma 2.3, \cite{ZZ18})
Suppose $A \in \mathbb{F}_{q}[t]$, then we have
\begin{equation*}
   \sum_{G \pmod H} E(G, H )(A)= \begin{cases}
     |H|, & \text{if } H \mid A,\\
     0, & \text{otherwise}.
    \end{cases}
\end{equation*}
\end{lemma}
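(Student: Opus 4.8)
The plan is to recognize the left-hand side as a sum of a single additive character over the finite abelian group $\mathbb{F}_q[t]/\langle H\rangle$ and then to invoke the standard orthogonality of characters. Writing $m = \deg H$, recall that $E(G,H)(A) = \lambda(\tau_G(A)) = \lambda(\tau(GA))$, and that the symmetry relation $E(G,H)(A) = E(A,H)(G)$ already noted in the text lets me treat $A$ as fixed and $G$ as the variable of summation. Concretely, I would define $\chi \colon \mathbb{F}_q[t]/\langle H\rangle \to \mathbb{C}^\times$ by $\chi(G) = \lambda(\tau(GA))$ and reduce the entire statement to the dichotomy that $\chi$ is the trivial character precisely when $H \mid A$.

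First I would check that $\chi$ is a well-defined additive character. Well-definedness modulo $H$ is immediate, since $G \equiv G' \pmod H$ forces $GA \equiv G'A \pmod H$, and $\tau$ depends only on the residue class modulo $H$. That $\chi$ is a homomorphism follows from the additivity of $\tau$, namely $\tau(GA + G'A) = \tau(GA) + \tau(G'A)$, together with the fact that $\lambda$ is an additive character of $\mathbb{F}_q$, so that $\chi(G+G') = \lambda(\tau(GA)+\tau(G'A)) = \chi(G)\chi(G')$. Thus $\chi$ belongs to the character group of the finite abelian group $\mathbb{F}_q[t]/\langle H\rangle$, which has $|H| = q^{m}$ elements.

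Next I would dispose of the two cases. If $H \mid A$, then $GA \equiv 0 \pmod H$ for every $G$, whence $\tau(GA) = 0$ and $\chi(G) = \lambda(0) = 1$ identically; summing the constant $1$ over the $|H|$ residue classes gives $|H|$. The substantive case is $H \nmid A$, where I must show $\chi$ is non-trivial. Let $R$ be the representative of $A$ modulo $H$ of degree $r := \deg R < m$, so $R \neq 0$ with leading coefficient $\rho \neq 0$. Choosing $G = c\,t^{\,m-1-r}$ for a scalar $c \in \mathbb{F}_q$, the product $t^{\,m-1-r}R$ has degree $m-1 < m$, so it requires no reduction modulo $H$ and its coefficient of $t^{m-1}$ is $\rho$; hence $\tau(GA) = c\rho$. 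As $c$ runs over $\mathbb{F}_q$ and $\rho \neq 0$, the value $c\rho$ runs over all of $\mathbb{F}_q$, so because $\lambda$ is non-principal there is a choice of $c$ with $\lambda(c\rho) \neq 1$, that is, $\chi(G) \neq 1$. Alternatively, the uniqueness in Lemma 2.1 of \cite{ZZ18} identifies $E(A,H)$ with the trivial character only when $A \equiv 0 \pmod H$.

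Finally I would conclude by the usual averaging argument for non-trivial characters: picking $G_0$ with $\chi(G_0) \neq 1$ and using that $G \mapsto G + G_0$ permutes $\mathbb{F}_q[t]/\langle H\rangle$, I obtain $\chi(G_0)\sum_G \chi(G) = \sum_G \chi(G+G_0) = \sum_G \chi(G)$, so that $(\chi(G_0)-1)\sum_G \chi(G) = 0$ forces $\sum_G \chi(G) = 0$. The only genuine obstacle is the non-triviality step in the case $H \nmid A$; everything else is the formal machinery of characters on a finite abelian group, and the explicit monomial choice $G = c\,t^{\,m-1-r}$ renders that step elementary.
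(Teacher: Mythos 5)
Your proof is correct, but note that the paper itself never proves this statement: it is imported verbatim as Lemma 2.3 of Zheng \cite{ZZ18}, so there is no internal argument to compare against, and what you have written supplies a proof where the paper has none. Your route is the standard one for orthogonality of additive characters on a finite abelian group, and every step checks out: $\chi(G)=\lambda(\tau(GA))$ is well defined modulo $H$ and additive because $\tau$ is additive and well defined modulo $H$ (both stated in the paper); the case $H\mid A$ is immediate; and the only substantive point, non-triviality of $\chi$ when $H\nmid A$, is handled cleanly by the monomial choice $G=c\,t^{m-1-r}$, since $\deg(t^{m-1-r}R)=m-1<\deg H$ forces no reduction and makes the top coefficient $c\rho$ sweep all of $\mathbb{F}_q$ as $c$ varies, so the non-principality of $\lambda$ produces a $G_0$ with $\chi(G_0)\neq 1$. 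One step you pass over silently is the identification $\tau(GA)=\tau(GR)$, which is needed because you compute with the reduced representative $R$ rather than $A$ itself; it is immediate from $GA\equiv GR \pmod H$ and the fact that $\tau$ depends only on the residue class, but it deserves a half-sentence. Your alternative closing remark via the uniqueness in Lemma 2.1 of \cite{ZZ18} is also valid and shorter, though it trades the elementary explicit construction for a citation; the explicit version has the advantage of making the whole lemma self-contained, which is arguably preferable given that the paper otherwise leans entirely on \cite{ZZ18} for this material.
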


\textit{In what follows, $D \mid H$ means $D$ is a monic divisor of
$H$ and $\sum_{D \mid H}$ means $D$ extending over all of monic
divisors of $H$.}

\vspace{1mm} The polynomial Ramanujan sum modulo $H$ on
$\mathbb{F}_q[t]$ is given by (see Section 4.1,  \cite{CL47})
\begin{equation}\label{rama sum ff}
\eta(G,H)=\sum_{\substack{A \pmod H\\(A, H)=1}}E(G,H)(A)
\end{equation}
where the summation extends over a complete residue system modulo
$H$ in $\mathbb{F}_q[t]$. Before we discuss the properties of the
Ramanujan sum, let us introduce the following definition.
\begin{definition} The M\"{o}bius function $\mu$ from $\mathbb{F}_q[t]$ to
$\{0,1,-1\}$ as follows: $\mu(0)=0$ and for $H \neq 0$
\begin{equation*}
   \mu\left({H}\right)= \begin{cases}
    (-1)^h ,& \text{if } H=P_1 \cdots P_h\, \text{where}\, P_j'\text{s} \,\text{are pairwise non-associate} \\
    & \quad \quad \quad \text{irreducible polynomials},\\
     0, & \text{if } H \,\text{is not square free},\\
     1, & \text{if } H\in \mathbb{F}_q^{*},
    \end{cases}
\end{equation*}
where $\mathbb{F}_q^{*}$ denotes the set of units of
$\mathbb{F}_{q}$.
\end{definition}
Clearly, $\mu(H )$ is a multiplicative function on
$\mathbb{F}_q[t]$. Furthermore, it satisfies the following identity:
\begin{equation*}
   \sum_{D|H}\mu\left({D}\right)= \begin{cases}
     1, & \text{if } \deg{H}=0\\
     0, & \text{if } \deg{H}\ge 1
    \end{cases}
\end{equation*}
\begin{definition} (Euler's totient function) If $H$ is not constant, $\varphi(H)$ is
defined to be the number of polynomials of degree less than
$\deg{H}$ that are coprime to $H$. Put also $\varphi(0) = 0$ and $\varphi(a) = 1$ for all $a\in \mathbb{F}_q^*$. Further, it is easy to verify that $\varphi(H)=\sum_{D \mid H}\mu(H/D)|D|$.
\end{definition}
 
It is easy to see that $\varphi$ is a multiplicative function. Now,
we proceed to list several properties of the polynomial Ramanujan
sum that are analogous to the classical Ramanujan sum.
\begin{itemize}
\item[(i)] For any polynomials $G,H_1,H_2$, we have $$\eta (G, H_1H_2)=\eta(G,H_1)\eta(G,H_2)\quad \text{if} \, (H_1,H_2)=1.$$
\item[(ii)] For any polynomials $G,H$ with $H\neq 0$, we have
\begin{equation*}
\eta(G, H)= \displaystyle\sum_{D|(H,
G)}|D|\mu\left(\frac{H}{D}\right).
\end{equation*}
\item [(iii)]  For any polynomials $G,H$ with $H\neq 0$, we have
\begin{equation*}
\eta(G,H)=\frac{\varphi(H)\mu(N)}{\varphi(N)}, \ \text{where} \ N=\frac{H}{(G,H)}.
\end{equation*}
\end{itemize}

\vspace{2mm} A complex-valued function defined on $\mathbb{F}_q[t]$
is called an arithmetic function on $\mathbb{F}_q[t]$ (or) simply an
arithmetic function if $f(aA)=f(A)$ for any $a \in
\mathbb{F}_{q}^{*}$. More generally, an arithmetic function of $n$
variables is a function  $f: (\mathbb{F}_q[t])^{n} \rightarrow
\mathbb{C}$ satisfying $f(a_{1}A_{1}, \dots, a_{n}A_{n})=f(A_{1},
\dots, A_{n})$ for any $(a_{1}, \dots, a_{n}) \in
(\mathbb{F}_{q}^{*})^{n}$.

\vspace{1mm} \noindent Let $\mathfrak{F}_{n}$ denotes the set of all
arithmetic function of $n$ variables. If $f, g \in \mathfrak{F}_{n}$,
then their convolution is defined as
\begin{equation*}
(f \ast g)(H_{1}, \dots, H_{n})= \sum_{D_{1} \mid H_{1}, \dots,
D_{n} \mid H_{n}} f(D_{1}, \dots, D_{n}) g\big(\frac{H_{1}}{D_{1}},
\dots, \frac{H_{n}}{D_{n}}\big).
\end{equation*}

The set $\mathfrak{F}_{n}$ forms a ring with point-wise addition and
convolution product with unit element $\mathcal{E}_n(H_1,
H_2,\ldots, H_n)$ defined by
\begin{equation*}
  \mathcal{E}_n(H_1,H_2,\ldots, H_n)= \begin{cases}
     1, & \text{if } H_1, H_2,\ldots, H_n\in \mathbb{F}_q^{*}\\
     0, & \text{othewise}
    \end{cases}
\end{equation*}
\begin{lemma}
An arithmetic function $\delta$ on $(\mathbb{F}_q[t])^n$ is
invertible if and only if $\delta(A_1,\ldots,A_n)\neq 0$ for any
$A_1,\ldots,A_n\in \mathbb{F}_q^{*}$.
\end{lemma}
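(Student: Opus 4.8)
The plan is to mirror the M\"obius-inversion argument used for $\mathcal{F}_n$ over $\mathbb{Z}$ (the lemma that $f$ is invertible iff $f(1,\dots,1)\ne 0$), transporting it to $\mathfrak{F}_n$ via the convention that $D\mid H$ ranges over \emph{monic} divisors. The first observation I would record is that the stated condition is really a single condition: since $\delta$ is an arithmetic function on $(\mathbb{F}_q[t])^n$, it satisfies $\delta(a_1A_1,\dots,a_nA_n)=\delta(A_1,\dots,A_n)$ for all $a_i\in\mathbb{F}_q^{*}$, so $\delta(A_1,\dots,A_n)=\delta(1,\dots,1)$ for every tuple of units $A_i\in\mathbb{F}_q^{*}$. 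Hence the requirement ``$\delta(A_1,\dots,A_n)\ne 0$ for all units'' is equivalent to the single nonvanishing condition $\delta(1,\dots,1)\ne 0$, the function-field analogue of $f(1,\dots,1)\ne 0$.

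For the forward direction I would assume $\delta\ast\gamma=\mathcal{E}_n$ for some $\gamma\in\mathfrak{F}_n$ and evaluate both sides at a tuple of units. Because the only monic divisor of a nonzero constant is $1$, the convolution sum collapses to the single term $\delta(1,\dots,1)\,\gamma(1,\dots,1)$, which must equal $\mathcal{E}_n(1,\dots,1)=1$; this forces $\delta(1,\dots,1)\ne 0$, and by the invariance above the same holds at every tuple of units.

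For the converse I would construct the inverse $\gamma$ explicitly by induction on the total degree $\deg H_1+\cdots+\deg H_n$. Set $\gamma(1,\dots,1)=\delta(1,\dots,1)^{-1}$, which is legitimate by hypothesis. For a tuple $(H_1,\dots,H_n)$ not consisting entirely of units, I would isolate in
\[
(\delta\ast\gamma)(H_1,\dots,H_n)=\sum_{D_1\mid H_1,\dots,D_n\mid H_n}\delta(D_1,\dots,D_n)\,\gamma\!\left(\tfrac{H_1}{D_1},\dots,\tfrac{H_n}{D_n}\right)
\]
the term with all $D_i=1$, namely $\delta(1,\dots,1)\,\gamma(H_1,\dots,H_n)$, and define
\[
\gamma(H_1,\dots,H_n)=-\frac{1}{\delta(1,\dots,1)}\sum_{\substack{D_i\mid H_i\\(D_1,\dots,D_n)\ne(1,\dots,1)}}\delta(D_1,\dots,D_n)\,\gamma\!\left(\tfrac{H_1}{D_1},\dots,\tfrac{H_n}{D_n}\right).
\]
Each summand on the right has some $D_i$ non-constant, so $(\tfrac{H_1}{D_1},\dots,\tfrac{H_n}{D_n})$ has strictly smaller total degree and $\gamma$ there is already defined; thus $\gamma$ is well-defined and by construction $\delta\ast\gamma=\mathcal{E}_n$. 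Finally, since the convolution on $\mathfrak{F}_n$ is commutative (the substitution $D_i\mapsto H_i/D_i$ interchanges the two factors coordinatewise), this right inverse is a genuine two-sided inverse, completing the argument.

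I expect the only real subtlety to be bookkeeping rather than a genuine obstacle: one must consistently interpret the divisor sums over monic representatives and verify that the arithmetic-function invariance makes $\delta(1,\dots,1)$ the relevant quantity, exactly as in the integer case. Once that is fixed, the induction on total degree runs routinely, and the commutativity of $\ast$ upgrades the constructed right inverse to a true inverse.
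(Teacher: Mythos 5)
Your argument is correct: reducing the hypothesis to the single condition $\delta(1,\dots,1)\neq 0$ via the unit-invariance of arithmetic functions, extracting the necessity by evaluating $\delta\ast\gamma=\mathcal{E}_n$ at a tuple of units, and building the inverse by induction on $\deg H_1+\cdots+\deg H_n$ is exactly the standard proof. The paper states this lemma without any proof (as it does for the corresponding statement over $\mathbb{Z}$), so there is nothing to diverge from; your write-up supplies the routine details, including the minor point of working with monic divisors, correctly.
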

\begin{lemma}(M\"{o}bius inversion formula)\label{IMF}
If $\Delta$ and $\delta$ are two arithmetic functions of $n$
variables satisfying
\[
\Delta(H_1,\ldots,H_n)=\sum_{D_1|H_1,\ldots,D_n|H_n}\delta(D_1,\ldots,D_n)
\]
then
\[
\delta(H_1,\ldots,H_n)=\sum_{D_1|H_1,\ldots,D_n|H_n}\Delta(D_1,\ldots,D_n)\mu{(H_1/D_1)}\cdots\mu{(H_n/D_n)}
\]
\end{lemma}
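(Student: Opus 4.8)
The plan is to phrase both the hypothesis and the conclusion inside the convolution ring $\mathfrak{F}_{n}$ introduced above, and to reduce everything to a single identity. Write $\mathbf 1$ for the constant function equal to $1$ on all of $(\mathbb{F}_q[t])^{n}$, and set $\mu^{(n)}(H_1,\ldots,H_n)=\mu(H_1)\cdots\mu(H_n)$; both are elements of $\mathfrak{F}_{n}$. Since $(\delta\ast\mathbf 1)(H_1,\ldots,H_n)=\sum_{D_1\mid H_1,\ldots,D_n\mid H_n}\delta(D_1,\ldots,D_n)$, the hypothesis says exactly that $\Delta=\delta\ast\mathbf 1$. Likewise, reading off the convolution one has $(\Delta\ast\mu^{(n)})(H_1,\ldots,H_n)=\sum_{D_1\mid H_1,\ldots,D_n\mid H_n}\Delta(D_1,\ldots,D_n)\mu(H_1/D_1)\cdots\mu(H_n/D_n)$, so the asserted conclusion is exactly $\delta=\Delta\ast\mu^{(n)}$. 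Hence it suffices to show that $\mu^{(n)}$ is the convolution inverse of $\mathbf 1$, that is, $\mathbf 1\ast\mu^{(n)}=\mathcal E_n$.

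To establish this, I would compute directly
\[
(\mathbf 1\ast\mu^{(n)})(H_1,\ldots,H_n)
=\sum_{D_1\mid H_1,\ldots,D_n\mid H_n}\mu(D_1)\cdots\mu(D_n)
=\prod_{j=1}^{n}\Big(\sum_{D_j\mid H_j}\mu(D_j)\Big),
\]
where the variables separate because the divisor sums range independently over the monic divisors of each $H_j$. By the M\"obius summation identity recorded just after the definition of $\mu$, each inner factor equals $1$ when $\deg H_j=0$ and $0$ when $\deg H_j\ge 1$. Thus the product is $1$ precisely when every $H_j$ is a unit of $\mathbb{F}_q$, and $0$ otherwise, which is the defining property of $\mathcal E_n$. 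By the preceding invertibility lemma, $\mathbf 1$ is indeed invertible in $\mathfrak{F}_n$, and this computation identifies its inverse as $\mu^{(n)}$.

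Finally I would convolve the hypothesis $\Delta=\delta\ast\mathbf 1$ on the right by $\mu^{(n)}$ and use associativity of $\ast$:
\[
\Delta\ast\mu^{(n)}=(\delta\ast\mathbf 1)\ast\mu^{(n)}=\delta\ast(\mathbf 1\ast\mu^{(n)})=\delta\ast\mathcal E_n=\delta.
\]
Reading off the left-hand side as above produces $\sum_{D_1\mid H_1,\ldots,D_n\mid H_n}\Delta(D_1,\ldots,D_n)\mu(H_1/D_1)\cdots\mu(H_n/D_n)$, which is verbatim the asserted formula for $\delta$. Since this mirrors the argument for Lemma~\ref{MIF lemma} over $\mathbb{Z}$, I expect no genuine obstacle; the only points requiring care are formal, namely that $\ast$ is associative on $\mathfrak{F}_n$, that the one-variable identity $\sum_{D\mid H}\mu(D)$ factors cleanly across the $n$ coordinates, and that all divisor sums are taken over monic divisors so that the bookkeeping is consistent with the chosen unit normalization.
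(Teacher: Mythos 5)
Your proof is correct and follows exactly the route the paper intends: the paper states Lemma~\ref{IMF} without proof, but its integer counterpart (Lemma~\ref{MIF lemma}) is recorded as being equivalent to $\mu^{(n)}$ being the convolution inverse of the constant function $1$, which is precisely what you verify via the identity $\sum_{D\mid H}\mu(D)=\mathcal{E}_1(H)$ and associativity of $\ast$ in $\mathfrak{F}_n$. Your attention to the monic-divisor normalization is the only point of care in the function-field setting, and you have handled it.
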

\vspace{2mm} An arithmetic function $\Delta$ is said to be periodic
with period $H$ (or $H$-periodic) for some $H \in \mathbb{F}_q[t]$
if $\Delta(H_1)=\Delta(H_2)$ for every $H_1,H_2$ satisfying
$H_1\equiv H_2\pmod H$.  For an $H$-periodic arithmetic function
$\Delta$, its discrete (finite) Fourier transform (DFT) is defined
to be the function
\begin{align*}
\widehat{\Delta}{(G)}=\sum_{A \pmod H}\Delta(A)E(G,H)(A).
\end{align*}
Also, the inverse discrete Fourier transform (IDFT) of $\Delta$ is given by
\begin{align*}
\Delta(G)=\frac{1}{|H|}\sum_{A \pmod
H}\widehat{\Delta}{(A)}E(-G,H)(A).
\end{align*}
We say that an
arithmetic function $\Delta$ is $H$-even if $f(G)=f((G,H))$, for
every $G \in \mathbb{F}_q[t]$. Clearly, if a function $\Delta$ is
$H$-even then it is $H$-periodic. Further, for a $H$-even function
$\Delta$, we have
\begin{align*}
\widehat{\Delta}{(G)}&=\sum_{D|H}\Delta(D)\sum_{\substack{A \pmod H \\(A, H/D)=1}}E(G,H/D)(A)\\
&= \sum_{D|H}\Delta(D)\eta(G,H/D).
\end{align*}

\noindent The Cauchy convolution of two $H$-periodic function
$\Delta$ and $\delta$ is defined by
\[
(\Delta\otimes\delta)(G)=\sum_{\substack{A,B\pmod H \\ A+B \equiv G
\pmod H}}\Delta(A)\delta(B)=\sum_{ A \pmod H} \Delta(A)\delta(G-A).
\]
Therefore, we have
\begin{align*}
\widehat{(\Delta\otimes\delta)}{(G)}&=\sum_{A \pmod H}\sum_{B  \pmod H}\Delta(B)\delta(A-B)E(G,H)(A)\\
&= \sum_{B \pmod H}\Delta(B)E(G,H)(B)\sum_{A \pmod H}\delta(A)E(G,H)(A)\\
&= \widehat{\Delta}{(G)}\widehat{\delta}{(G)}.
\end{align*}
Similarly, we can define the Cauchy convolution of a finite number
of $H$-periodic functions. Next, for every
$B\in\mathbb{F}_q[t]/\langle H\rangle$, we define a $H$-periodic
function as
\begin{equation*}
 \rho_{H,H_1}(B)=\begin{cases}
  1, & \text{if} \,(B,H)=H_1\\
  0, & \text{if}\, (B,H)\neq H_1.
 \end{cases}
\end{equation*}
Then, it is easy to see that
\iffalse
\begin{align}\label{DFT Rama sum}
  \widehat{\rho_{H,H_1}}(B)&=\sum_{A \pmod H}\rho_{H,H_1}(A)E{(B,H)(A)}\nonumber\\
 &=\sum_{\substack{A \pmod H \\(A,H)=H_1}}E{(B,H)(A)}\nonumber\\
  &=\sum_{\substack{A \pmod H\\ (A,H/H_1)=1}}E{(B,H/H_1)(A)}=\eta(B,H/H_1).
\end{align}
\fi
\begin{equation*}
\widehat{\rho_{H,H_1}}(B)=\eta(B,H/H_1).
\end{equation*}
\vspace{2mm}
\noindent
Let $H_{1}, \dots, H_{n}, A \in
\mathbb{F}_q[t]$. For any $H \in \mathbb{F}_q[t]$ such that $[H_{1},
\dots, H_{n}] \mid H$, we define
\begin{equation*}
I(A; H_{1}, \dots, H_{n}): = \frac{1}{|H|} \sum_{C \pmod H} \eta(C,
H_{1}) \cdots \eta(C, H_{n})E(A,H)(C),
\end{equation*}
and
\begin{equation*}
J(A; H_{1}, \dots, H_{n})= \begin{cases}
    \frac{|H_{1}| \cdots |H_{n}|}{|[H_{1}, \dots, H_{n}]|} ,& \text{if }   \frac{H}{[H_{1}, \dots, H_{n}]} \mid A\\
     0, & \text{otherwise}.
    \end{cases}
\end{equation*}

\begin{lemma}\label{MI of E ff}
For any $H_{1}, \dots, H_{n}, A \in \mathbb{F}_q[t]$, we have
\begin{equation*}
\sum_{D_{1} \mid H_{1}, \dots, D_{n} \mid H_{n}} I(A;D_{1}, \dots,
D_{n})= J(A; H_{1}, \dots, H_{n}).
\end{equation*}
\end{lemma}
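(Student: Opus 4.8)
The plan is to mirror the proof of Lemma \ref{MI of E}, replacing the explicit formula \eqref{explicit formula for RS} by its polynomial analogue $\eta(G,H)=\sum_{D\mid(H,G)}|D|\mu(H/D)$ (property (ii) of the polynomial Ramanujan sum) and replacing the classical orthogonality of additive characters by Lemma \ref{ortho of E lem}. First I would substitute this explicit formula for each factor $\eta(C,H_i)$ into the definition of $I(A;H_1,\dots,H_n)$ and interchange the order of summation, pulling the $C$-sum to the inside. Since requiring $D_i\mid(H_i,C)$ for all $i$ is equivalent to requiring $D_i\mid H_i$ for all $i$ together with $[D_1,\dots,D_n]\mid C$, this yields
\begin{equation*}
I(A;H_1,\dots,H_n)=\frac{1}{|H|}\sum_{D_1\mid H_1,\dots,D_n\mid H_n}\Big(\prod_{i=1}^n|D_i|\,\mu(H_i/D_i)\Big)\sum_{\substack{C\pmod H\\ L\mid C}}E(A,H)(C),
\end{equation*}
where $L=[D_1,\dots,D_n]$ is a monic divisor of $H$ (each $D_i\mid H_i\mid[H_1,\dots,H_n]\mid H$, so $L\mid H$).

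The crux is to evaluate the inner twisted sum $\sum_{C\pmod H,\,L\mid C}E(A,H)(C)$, which plays the role of the geometric sum $\sum_{[d_1,\dots,d_n]\mid j}e(bj/m)$ in the integer proof. Here I would substitute $C=LC'$, so that $C'$ runs over a complete residue system modulo $H/L$, and use the identity $E(A,H)(LC')=E(A,H/L)(C')$. Through $E(G,H)(X)=\lambda(\tau(GX))$ this identity reflects the compatibility of the additive characters for the moduli $H$ and $H/L$ under multiplication by the monic polynomial $L$, and can be verified by writing $X=Q(H/L)+R$ with $\deg R<\deg(H/L)$ and comparing the coefficient of $t^{\deg H-1}$ in $LR$ with the coefficient of $t^{\deg(H/L)-1}$ in $R$. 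Then, using $E(A,H/L)(C')=E(C',H/L)(A)$ and the orthogonality relation of Lemma \ref{ortho of E lem} applied modulo $H/L$, the inner sum equals $|H|/|L|$ when $(H/L)\mid A$ and $0$ otherwise.

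Substituting this back and noting that
\begin{equation*}
\frac{1}{|H|}\Big(\prod_{i=1}^n|D_i|\Big)\cdot\frac{|H|}{|L|}=\frac{|D_1|\cdots|D_n|}{|[D_1,\dots,D_n]|}
\end{equation*}
precisely when $\big(H/[D_1,\dots,D_n]\big)\mid A$, I would identify each bracketed term with $J(A;D_1,\dots,D_n)$, obtaining
\begin{equation*}
I(A;H_1,\dots,H_n)=\sum_{D_1\mid H_1,\dots,D_n\mid H_n}J(A;D_1,\dots,D_n)\,\mu(H_1/D_1)\cdots\mu(H_n/D_n).
\end{equation*}
Applying the M\"{o}bius inversion formula (Lemma \ref{IMF}) then gives $\sum_{D_1\mid H_1,\dots,D_n\mid H_n}I(A;D_1,\dots,D_n)=J(A;H_1,\dots,H_n)$, as required. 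I expect the main obstacle to be the character-sum evaluation, specifically establishing $E(A,H)(LC')=E(A,H/L)(C')$ and thereby the restricted orthogonality $\sum_{C\pmod H,\,L\mid C}E(A,H)(C)$; once this is in hand, everything else is a direct transcription of the integer argument.
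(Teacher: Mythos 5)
Your proposal is correct and is exactly the route the paper intends: the paper only remarks that Lemma \ref{MI of E ff} ``follows similarly to Lemma \ref{MI of E},'' and your argument is that translation carried out in full, with property (ii) of $\eta$ replacing \eqref{explicit formula for RS} and Lemma \ref{ortho of E lem} (via the substitution $C=LC'$ and the identity $E(A,H)(LC')=E(A,H/L)(C')$, which your coefficient comparison correctly verifies since $L$ is monic) replacing the classical geometric-sum evaluation. No gaps; if anything, you have supplied the one nontrivial detail the paper leaves implicit.
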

The proof of Lemma \ref{MI of E ff} follows similarly to the Lemma \ref{MI of E} in Section \ref{lcm and gcd sec}.

\subsection{Proof of the Theorems}
By using the above notations, the proof of Theorem \ref{sing lin thm ff} and Theorem \ref{without coeff thm ff} follows similarly to the proof of Theorem 3.1 and Theorem 3.4 in \cite{BBVRL17}, respectively. The proof of Theorem \ref{soln of congs thm ff} is similar to the proof of Theorem \ref{soln of congs thm}, but with the use of Lemma \ref{MI of E ff} instead of Lemma \ref{MI of E} (see Section \ref{lcm and gcd sec}). The proof of Theorem \ref{res lin cong ff} follows similarly to the proof of Theorem \ref{Theorem 3} (see Section \ref{soln of lin cong}).
\subsection{Some examples}
Here, we examine the solution count given by Theorem \ref{soln of congs thm ff} and Theorem \ref{res lin cong ff} through some examples and also verify them by the first principle.\\
\noindent {\bf Example for Theorem \ref{soln of congs thm ff}.}\\
Consider the solutions of the congruence
$$t f(t) + t^2 g(t) \equiv 2t+t^3~~(mod~~t^4).$$
We are looking for solutions $f,g$ in $\mathbb{F}_q[t]/(t^4).$ It follows from Theorem \ref{soln of congs thm ff} that the number of solutions is $|t^4||(t,t^2,t^4)| =|t^4||t| = q^5$.

\noindent
To check this by first principle, we note that $(2,t)$ is a solution. So, if
$(f,g)$ is an arbitrary solution with $\deg(f), \deg(g) \leq 3$, then
$$t (f(t) - 2) + t^2 (g(t) - t) = t^4 h(t),$$
where $h(t) = h_0+h_1t+ \cdots \in \mathbb{F}_q[t]$. So, writing
$g(t)= b_0+b_1 t+b_2 t^2+b_3 t^3$, we have
$$f(t)  =  2 - b_0 t + (1-b_1) t^2 + (h_0 - b_2) t^3 \pmod{t^4}.$$
So, the coefficient of $t^3$ for $f$ is arbitrary, and others are
determined. The coefficients of $g$ are arbitrary. Hence, the number
of solutions is $q^5$.\\
Note that $(f,g)$ varies in $\mathbb{F}_q[t]/(t^4) \times
\mathbb{F}_q[t]/(t^4)$ which has cardinality $q^8$.

\vspace{2mm}
\noindent {\bf Example for Theorem \ref{res lin cong ff}.}\\
 Let $q$ be an odd prime.
Consider the following system of congruences
\begin{align}\label{eq3}
&X_1+(1+t)X_2\equiv 3t +1\pmod {t^2} \nonumber\\
&X_1+ X_2\equiv -2\pmod {t+1},
\end{align}
with the restrictions
$$(X_1,t^2)=1, (X_1,t+1)=1, (X_2,t^2) = t, (X_2,t+1)=1.$$
Then we observe that $(2t+1, t)$ is one solution of the system
\eqref{eq3}. We shall prove that the number of solutions for
$X_1,X_2$ modulo $t^2(t+1)$ is $(q-1)(q-2)$. We first use Theorem \ref{res lin cong ff} to determine the number and verify it by first principles as
a check.\\
In the notation of the theorem, then $H=t^2(t+1)$, $T_{11}=1,
T_{12}=t, T_{21}=1, T_{22}=1$, so $T_1=1, T_2 =t$ and $D_{ij}=1$ for
$1\le i,j\le 2$, hence $D_i=1$ for $1\le i\le 2$.  To find all the
solutions of \eqref{eq3}, we need only consider the following
congruences
\[
Y_1+Y_2\equiv B \pmod {t^2(t+1)},
\]
satisfying $(Y_1, t^2(t+1))=1$ and $(Y_2, t^2(t+1))=t$, where $B =
3t+1$ is the unique solution of the congruences $B \equiv 3t+1\pmod
{t^2}, B \equiv -2 \pmod{(t+1)}$. Recall that $\varphi(H)=\sum_{D|H}\mu(D)\big|\frac{H}{D}\big|$ and $\eta(G,H)=\frac{\varphi(H)\mu(N)}{\varphi(N)}$, where $N=\frac{H}{(G,H)}$.\\
We first compute the number of solutions using Theorem \ref{res lin cong ff}. For any
prime $q$, the number of solutions of the system of congruences \eqref{eq3} is equal to
\begin{align*}
&\frac{1}{|t^2(t+1)|}\times\frac{\varphi({t^2(t+1)}\varphi(t(t+1))}{\varphi({t^2(t+1)}\varphi(t(t+1))}\times\\
&\sum_{D|t^2(t+1)}\eta(3t+1,D)\prod_{j=1}^{2}\eta\left(\frac{t^2(t+1)}{D},\frac{t^2(t+1)}{T_jD_j}\right)\\
\quad
&=\frac{1}{q^{3}}\sum_{D|t^2(t+1)}\eta(3t+1,D)\prod_{j=1}^{2}\eta\left(\frac{t^2(t+1)}{D},\frac{t^2(t+1)}{T_jD_j}\right).
\end{align*}

Now, we compute the product of the Ramanujan sums for each divisor $D$ of $t^{2}(t+1)$ in the following table:
\begin{center}
\begin{tabular}{ | m{4em} | m{1.5cm}| m{2cm} |m{2cm} | m{2cm}| m{2cm}| m{1.5cm}| }
  \hline
  Sl. no& $D=$ & $\eta(3t+1,D)$ & $\eta\left(\frac{t^2(t+1)}{D}\right)$ & $\eta\left(\frac{t^2(t+1)}{T_jD_j}\right)$ & Product \\
  \hline
  1 & $1$ & $1$ & $q(q-1)^2$ & $(q-1)^2$& $q(q-1)^4$ \\
  \hline
  2 & $t$ & $-1$ & $-q(q-1)$ & $(q-1)^2$& $q(q-1)^3$ \\
  \hline
  3 & $t+1$ & $-1$ & $-q(q-1)$ & $-(q-1)$& $-q(q-1)^2$\\
  \hline
  4 & $t^2$ & $0$ & $-$ & $-$& $0$\\
  \hline
  5 & $t(t+1)$ & $1$ & $q$ & $-(q-1)$& $-q(q-1)$\\
  \hline
  6 & $t^2(t+1)$ & $0$ & $-$ & $-$& $0$\\
  \hline
\end{tabular}
\end{center}
Hence, the number of solutions is equal to
$$\frac{1}{q^3}[q(q-1)^4+q(q-1)^3-q(q-1)^2-q(q-1)]=(q-1)(q-2).$$

\vspace{2mm}
\noindent Let us compute the number of solutions by first principles now. As we observed above, we are looking for solutions of
\[
Y_1+Y_2\equiv 3t+1 \pmod{t^2(t+1)},
\]
satisfying $(Y_1, t^2(t+1))=1$ and $(Y_2, t^2(t+1))=t$. For $Y_2=at$ (with $1 \leq a \leq q-1$), the unique solution $Y_1
\equiv 3t+1-at$ modulo $t^2(t+1)$ satisfies the restriction $(Y_1,
t^2(t+1))=1$ if and only if, $a \neq 0, 2$. Therefore, $(3t+1-at, at)$ are solutions for $1 \leq a \leq q-1$ and $a \neq 0,2$. These give $q-2$ solutions.

\noindent
For $Y_2 = at^2+bt$ with $a \neq 0$, the unique solution $Y_1 =
3t+1-at^2-bt$ modulo $t^2(t+1)$ satisfies $(Y_1, t^2(t+1))=1$ if
and only if, $b \neq 0, a, a+2$. Note that when $a = q-2$, the
restrictions $b \neq 0$ and $b \neq a+2$ coincide. Therefore, the
solutions are $(3t+1-at^2-bt, at^2+bt)$ with $a \neq 0, q-2$ and  $b
\neq 0, a, a+2$; and $(3t+1-(q-2)t^2-bt, (q-2)t^2+bt)$ with $b \neq 0,a$. Thus, the total number of solutions is
$$(q-2) + (q-2)(q-3) + (q-2) = (q-2)(q-1).$$
\subsection*{Acknowledgment}\

This work was done in June 2023. The first and second authors would like to thank the Indian Statistical Institute, Bangalore centre for providing an ideal environment to carry out this work. The second author expresses his gratitude to NBHM for financial
support during the period of this work. 
\bibliographystyle{plain}    %% ???
%\bibliography{ref_crt}

\end{document}